\documentclass[12pt]{article}
\usepackage{a4, latexsym, amsfonts}
\usepackage{amssymb, amsmath, ifthen}
\usepackage[table]{xcolor}

\newcommand{\im}{\mathrm{im\, }}
\newcommand{\dom}{\mathrm{dom\, }}
\newcommand{\rank}{\mathrm{rank\, }}
\newcommand{\idrank}{\mathrm{idrank\, }}
\newcommand{\fix}{\mathrm{fix\, }}
\newcommand{\sh}{\mathrm{sh\, }}
\newcommand{\defect}{\mathrm{def\, }}

\newcommand{\qed}{\hfill $\square$ \vspace{3mm}}
\newcommand{\proof}{\noindent \textit{Proof. \ }}

\newtheorem{theorem}{Theorem}[section]

\newtheorem{lemma}[theorem]{Lemma}
\newtheorem{example}[theorem]{Example}

\newtheorem{corollary}[theorem]{Corollary}
\newtheorem{proposition}[theorem]{Proposition}

\date{}

\begin{document}

\title{On the semigroups of fence-decreasing and fence-preserving transformations on a finite fence}

\author{Gonca Ay\i k,  Hayrullah Ay\i k, Ilinka Dimitrova, J\"{o}rg Koppitz$^{*}$
	\let\thefootnote\relax\footnote{\textbf{Corresponding author: J\"{o}rg Koppitz}, Institute of Mathematics and Informatics, Bulgarian Academy of Sciences, Sofia, Bulgaria, \\
		E-mail: koppitz@math.bas.bg \\
		\textbf{Gonca Ay\i k}, Department of Mathematics, \c{C}ukurova University, Adana, Turkey,\\
		E-mail: agonca@cu.edu.tr \\
		\textbf{Hayrullah Ay\i k}, Department of Mathematics, \c{C}ukurova University, Adana, Turkey,\\
		E-mail: hayik@cu.edu.tr \\
		\textbf{Ilinka Dimitrova}, Faculty of Mathematics and Natural Science, South-West University "Neofit Rilski", Blagoevgrad, Bulgaria,\\
		E-mail: ilinka\_dimitrova@swu.bg }}

\maketitle

\noindent \textbf{Abstract:} For a natural number $n$, a fence $[n]=\{1\prec 2\succ 3\prec 4\succ 5\prec \cdots n\}$ is a partial ordered set. A partial transformation $\alpha$ is called fence-decreasing if $x\alpha \preceq x$ for all $x$ in the domain of $\alpha$, and fence-preserving if $x\prec y$ implies $x\alpha \preceq y\alpha$ for all $x$ and $y$ in the domain of $\alpha$. In this paper, we consider the monoids $\mathcal{DF}_{n}$ ($\mathcal{PDF}_{n})$ of all fence-decreasing full (partial) transformations as well as the monoid $\mathcal{PCF}_{n}$ of all fence-preserving transformations of $\mathcal{PDF}_{n}$. For these three monoids and some of their ideals, we determine the unique minimal generating set. Moreover, we calculate the rank of $\mathcal{DF}_{n}$, $\mathcal{PDF}_{n}$, and $\mathcal{PCF}_{n}$. Additional, we provide several combinatorial results concerning these three monoids.

\vspace*{2mm}
\noindent \textbf{Keywords:}  transformation semigroups; fence-preserving; fence-decreasing; generating set; rank; up fence; zig-zig order

\vspace*{2mm}
\noindent \textbf{2020 Mathematics Subject Classification:} 20M20; 20M05

\section{Introduction}

Let $n$ be a natural number, and let $[n]=\{1,\ldots ,n\}$,\, $[n]^{o}= \{k\in [n] : k\mbox{ is odd}\}$ and $[n]^{e}= \{k\in [n] : k\mbox{ is even}\}$. For a subset $Y \subseteq [n]$, a mapping $\alpha :Y \rightarrow [n]$ is called a \emph{partial transformation}, where $Y$ is the \emph{domain} of $\alpha$, in symbols: $\dom(\alpha)$, and $\im(\alpha) =\{ x\alpha : x\in \dom(\alpha)\}$ is the \emph{image $($range$)$} of $\alpha$. For convenient, the notation $x\alpha$ implies $x\in \dom(\alpha)$.  We call $x\in [n]$ a \emph{fixed point} of $\alpha$ if $x\alpha =x$ and denote the set of all fixed points of $\alpha$ by $\fix(\alpha)$. Moreover, the \emph{shift set}, \emph{defect set} and \emph{kernel} of $\alpha$ are defined by $\sh(\alpha) =\im(\alpha) \setminus \fix(\alpha)$,\, $\defect(\alpha) =[n] \setminus \im(\alpha)$ and 
$$ker(\alpha) =\{ (x,y)\in \dom(\alpha)\times \dom(\alpha) : x\alpha=y\alpha \},$$
respectively. If $Y=[n]$, then $\alpha$ is called \emph{full transformation}. By $\mathcal{PT}_{n}$ and $\mathcal{T}_{n}$, we denote the set of all partial and full, respectively, transformations on $[n]$. Both sets, $\mathcal{PT}_{n}$ and $\mathcal{T}_{n}$, form monoids under the usual composition of mappings with the identity mapping $1_{n}$ as identity element. The empty transformation, i.e. its domain is the empty set, denoted by $0_{n}$, is a left- and right-zero in $\mathcal{PT}_{n}$. For any subset $Y$ of $[n]$, let $1_{Y}$ be the mapping $1_{n}$ restricted to $Y$. Such a mapping $1_{Y}$ is called \emph{partial identity}. Clearly, $1_{[n]}=1_{n}$ and $1_{\emptyset }=0_{n}$.

\smallskip

Let $<$ be the canonical linear order on $[n]$. A partial transformation $\alpha \in \mathcal{PT}_{n}$ is called \emph{order-preserving} if for all $x,y\in \dom(\alpha )$ with $x<y$, we have $x\alpha \leq y\alpha$. Moreover, $\alpha$ is called \emph{order-decreasing} if $x\alpha \leq x$ for all $x\in \dom(\alpha)$. The set of all order-preserving partial transformations on $[n]$ and the set of all order-decreasing partial transformations on $[n]$ are denoted by $\mathcal{PO}_{n}$ and $\mathcal{PD}_{n}$, respectively. Both sets, $\mathcal{PO}_{n}$ and $\mathcal{PD}_{n}$, form submonoids of $\mathcal{PT}_{n}$, which are already well studied (see, for example \cite{AAK,D3,F2,HZ,LU2,ZHQ}).

A linear order is a particular partial order and it appears naturally the question of the study of semigroups of transformations preserving another canonical partial order. A partial order "near" to the linear order is the so-called zig-zag order (fence). We consider a partial order $\prec$ on $[n]$ such that each element is either maximal or minimal, defined by 
\begin{eqnarray*}
	1 &\prec &2\succ 3\prec 4\succ \ldots \prec n\text{ if }n\text{ is even and} \\
	1 &\prec &2\succ 3\prec 4\succ \ldots \succ n\text{ if }n\text{ is odd }
\end{eqnarray*}%
or 
\begin{eqnarray*}
	1 &\succ &2\prec 3\succ 4\prec \ldots \prec n\text{ if }n\text{ is odd and} \\
	1 &\succ &2\prec 3\succ 4\prec \ldots \succ n\text{ if }n\text{ is even.}
\end{eqnarray*}%
In the first case, we call $\prec$ an \emph{up-fence} and in the second case a
\emph{down-fence}. As in the case of the linear order, an $\alpha \in \mathcal{PT}_{n}$ is called \emph{fence-preserving} if for all $x,y\in \dom(\alpha )$ with $x\prec y$, we have $x\alpha \preceq y\alpha$, and $\alpha $ is called \emph{fence-decreasing} if $x\alpha \preceq x$ for all $x\in \dom(\alpha )$. Since up-fences and down-fences are dually, it is enough to consider up-fences in this paper without to mention it. Since $[n]$ is a chain if $n\leq 2$, we suppose that $n\geq 3$ throughout this paper otherwise is stated.
\begin{equation*}
	\begin{picture}(300,70)
		\put(30,40){\circle*{3}}
		\put(50,60){\circle*{3}}
		\put(70,40){\circle*{3}}
		\put(90,60){\circle*{3}}
		\put(110,40){\circle*{3}}
		\put(130,60){\circle*{3}}
		\put(150,40){\circle*{3}}
		\put(170,60){\circle*{3}}
		\put(50,60){\line(-1,-1){20}}
		\put(50,60){\line(1,-1){20}}
		\put(90,60){\line(-1,-1){20}}
		\put(90,60){\line(1,-1){20}}
		\put(130,60){\line(-1,-1){20}}
		\put(130,60){\line(1,-1){20}}
		\put(170,60){\line(-1,-1){20}}
		\put(170,60){\line(1,-1){15}}
		\put(28,33){\makebox(0,0){$^1$}}
		\put(46,64){\makebox(0,0){$_2$}}
		\put(68,33){\makebox(0,0){$^3$}}
		\put(86,64){\makebox(0,0){$_4$}}
		\put(108,33){\makebox(0,0){$^5$}}
		\put(126,64){\makebox(0,0){$_6$}}
		\put(148,33){\makebox(0,0){$^7$}}
		\put(166,64){\makebox(0,0){$_8$}}
		\put(210,44){\makebox(0,0){$\cdots $}}
	\end{picture}
	\vspace{-10mm}
\end{equation*}
If we consider a down-fence, then we will note it explicitly. Fences were firstly considered in relation to transformations by Rutkowski \cite{R} as well as Currie and Visentin \cite{CV}. Let $\mathcal{PF}_{n}$ ($\mathcal{PDF}_{n}$) be the set of all fence-preserving (fence-decreasing) partial transformations on $[n]$, and let $\mathcal{F}_{n} = \mathcal{PF}_{n} \cap \mathcal{T}_{n}$. All the sets $\mathcal{F}_{n}$, $\mathcal{PF}_{n}$, and $\mathcal{PDF}_{n}$ form submonoids of $\mathcal{PT}_{n}$. The monoids $\mathcal{F}_{n}$ and $\mathcal{PF}_{n}$ are already well studied (see, for example \cite{F1,J3,J2,PS,J1}). But $\mathcal{PDF}_{n}$ is not yet studied and we will investigate this monoid in the present paper. 

\smallskip

If we consider the linear order, then the monoid $\mathcal{PC}_{n}= \mathcal{PO}_{n} \cap \mathcal{PD}_{n}$ is a \emph{Catalan monoid}. This monoid is well studied (see, for example \cite{PH,LU3}). In the present paper, we will also study the fence counterpart of $\mathcal{PC}_{n}$, the monoid $\mathcal{PCF}_{n}=\mathcal{PF}_{n} \cap \mathcal{PDF}_{n}$. Moreover, we will consider the monoid $\mathcal{DF}_{n}= \mathcal{PDF}_{n} \cap \mathcal{T}_{n}$ of all full fence-decreasing transformations on $[n]$. It will turn out that $\mathcal{DF}_{n}$ forms a band. For $\mathcal{U}_{n}\in \{\mathcal{DF}_{n},\mathcal{PDF}_{n},\mathcal{PCF}_{n}\}$ and $0\leq r\leq n-1$, let 
$$\mathcal{U}(n,r)=\{\alpha \in \mathcal{U}_{n} : \left\vert \im(\alpha )\right\vert \leq r\}.$$
Clearly, $\mathcal{U}(n,r)$ is an ideal of $\mathcal{U}_{n}$.

\smallskip

Recall that the \emph{rank} of a finite semigroup $S$ is the minimal size of a generating set of $S$ and it is denoted by $\rank(S)$. An element $s\in S$ with $s^{2}=s$ is called \emph{idempotent}. The set of all idempotents of a subset $U$ of $S$ is denoted by $E(U)$. If there is a generating set $A$ of $S$ consisting entirely of idempotents, then we denote the minimal size of a generating set of $S$ consisting entirely of idempotents as the idempotent rank of $S$, in symbols: $\idrank(S)$, in particular, $S$ is called idempotent generated. Clearly, if $S$ has a unique minimal generating set $A$, then $rank(S)$ is the cardinality of $A$. An element $s\in S$ is called \emph{undecomposable} in $S$ if $s\neq s_{1}s_{2}$ for all $s_{1},s_{2}\in S\setminus \{s\}$. Clearly, every generating set of $S$ must contain all undecomposable elements of $S$. For other terms in semigroup theory, we refer to \cite{GM,H}.

\smallskip

In the present paper, we determine the cardinality of $\mathcal{DF}_{n}$, and the unique minimal generating sets, and so the ranks of $\mathcal{DF}_{n}$ and $\mathcal{DF}(n,r)$, $\left\lfloor \frac{n+1}{2} \right\rfloor \leq r\leq n-1$. For the monoid $\mathcal{PDF}_{n}$, we determine the cardinality, the number of its idempotents, and the unique minimal generating set. It will turn out that $\rank(\mathcal{PDF}_{n}) = \idrank(\mathcal{PDF}_{n})=2n$. Moreover, we determine the unique minimal generating sets for all ideals $\mathcal{PDF}(n,r)$, $0\leq r\leq n-1$. We obtain corresponding results for the monoid $\mathcal{PCF}_{n}$ and the ideals $\mathcal{PCF}(n,r)$, $0\leq r\leq n-1$.

\section{Fence-decreasing full transformations on $[n]$}

For a real number $x$, we denote the greatest integer not more than $x$ by $\lfloor x\rfloor$ and the least integer not less than $x$ by $\lceil x\rceil$. Let $\alpha\in \mathcal{DF}_{n}$. Since each element of $[n]^{o}$ is minimal, we have $i\alpha =i$ for every $i\in [n]^{o}$, and so $[n]^{o} \subseteq \fix(\alpha) \subseteq \im(\alpha)$. Thus, we notice that
$$ \left\lfloor \frac{n+1}{2} \right\rfloor \leq \lvert \im(\alpha) \rvert \quad \mbox{and}\quad \defect(\alpha) \subseteq [n]^{e}.$$
Moreover, if $i\in [n]^{e}\setminus \{n\}$, then we note that $i\alpha \in \{i-1,i,i+1\}$, and $n\alpha \in \{n-1,n\}$ if $n$ is even. Therefore, we immediately have
$$\lvert \mathcal{DF}_{n} \rvert =\left\{ 
\begin{array}{ll}
	3^{m} &\mbox{if $n$ is odd,}\\
	2\cdot 3^{m} &\mbox{if $n$ is even,}
\end{array}\right.$$
where $m=\left\lfloor \frac{n-1}{2} \right\rfloor$. For each $\left\lfloor \frac{n+1}{2} \right\rfloor \leq r\leq n$, let 
$$J_{r}=J_{n,r}=\{ \alpha \in \mathcal{DF}_{n} : \lvert \im(\alpha) \rvert=r \}.$$
Then, it is clear that $J_{n,n}=\{ 1_{n} \}$. More generally, we have the following result.

\smallskip

\begin{proposition} \label{p1} 
	Let $q=\left\lfloor \frac{n+1}{2} \right\rfloor$. Then, for each $q\leq r\leq n-1$, we have 
	$$\lvert J_{n,r} \rvert =\left\{ \begin{array}{ll}
		\binom{q-1}{r-q} 2^{n-r} & \mbox{if $n$ is odd,}\vspace{1mm} \\
		\frac{r}{q} \binom{q}{r-q} 2^{n-r-1} & \mbox{if $n$ is even.}
	\end{array}\right.$$
	In particular, $\lvert J_{n,n-1} \rvert =n-1$. 
\end{proposition}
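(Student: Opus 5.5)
The plan is to count the elements of $J_{n,r}$ directly from the local description of $\mathcal{DF}_{n}$ given just before the proposition. Every odd point is fixed. Each even $i\neq n$ has $i\alpha\in\{i-1,i,i+1\}$, and if $n$ is even then $n\alpha\in\{n-1,n\}$. Moreover, these choices are independent, which is exactly what produced the formula for $\lvert\mathcal{DF}_{n}\rvert$.

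Next I determine $\im(\alpha)$. If an even $i$ is not fixed, then $i\alpha$ is an odd point, which is already in $\im(\alpha)$. Hence $\im(\alpha)=[n]^{o}\cup(\fix(\alpha)\cap[n]^{e})$. Since $\lvert[n]^{o}\rvert=q$, we get $\lvert\im(\alpha)\rvert=q+k$, where $k$ is the number of fixed even points. So $\alpha\in J_{n,r}$ if and only if exactly $k=r-q$ even points are fixed. It remains to count, for each choice of the fixed set, the number of ways to send the remaining even points to a neighbour.

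For $n$ odd, there are $q-1$ even points, each with two non-fixed choices. This gives
$$\lvert J_{n,r}\rvert=\binom{q-1}{k}2^{q-1-k}.$$
Using $2q-1=n$, we have $q-1-k=n-r$, which is the stated formula.

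For $n$ even, there are $q=n/2$ even points, and the last one, $n$, has only one non-fixed choice. I split according to whether $n$ is fixed.
\begin{itemize}
\item If $n$ is fixed, the count is $\binom{q-1}{k-1}2^{q-k}$.
\item If $n$ is not fixed, the count is $\binom{q-1}{k}2^{q-1-k}$.
\end{itemize}
Adding these and using $\binom{q-1}{k-1}=\frac{k}{q}\binom{q}{k}$ and $\binom{q-1}{k}=\frac{q-k}{q}\binom{q}{k}$, the total is
$$2^{q-k-1}\binom{q}{k}\cdot\frac{2k+q-k}{q}=\frac{r}{q}\binom{q}{r-q}2^{n-r-1},$$
since $q+k=r$ and $q-k-1=n-r-1$.

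The only mildly delicate step is this even case, because of the asymmetry at the endpoint $n$; the binomial identities above resolve it.

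Finally, set $r=n-1$. For $n$ odd this gives $\binom{q-1}{q-2}\cdot 2=2(q-1)=n-1$. For $n$ even, $k=q-1$ gives $\frac{n-1}{q}\cdot q\cdot 2^{0}=n-1$.
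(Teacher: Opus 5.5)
Your proof is correct and follows the paper's argument: you count by the number $r-q$ of fixed even points and, for $n$ even, split on whether $n\alpha=n$ or $n\alpha=n-1$. The only difference is that the paper gets the two pieces as $\lvert J_{n-1,r-1}\rvert$ and $\lvert J_{n-1,r}\rvert$ from the odd case, while you count them directly and also carry out the binomial simplification that the paper leaves implicit.
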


\proof Let $\alpha \in \mathcal{DF}_{n}$ and let $q\leq r\leq n-1$. First, let $n$ be odd, i.e. $n=2q-1$ for some $q\in \mathbb{N}$. Since $[n]^{o} \subseteq \im(\alpha)$, the cardinality of $[n]^{o}$ is $q$, and since $i\alpha \in\{i-1 ,i,i+1\}$ for every $i\in [n]^{e}$, we conclude that $\alpha \in J_{n,r}$ if and only if there exist exactly $r-q$ elements in $[n]^{e}$ which are fixed. Thus, since the cardinality of $[n]^{e}$ is $q-1$, and since there exist $\binom{q-1}{r-q}$ subsets of $[n]^{e}$ of size $r-q$, it follows that 
$$\lvert J_{n,r} \rvert= \binom{q-1}{r-q} 2^{(q-1)-(r-q)} =\binom{q-1}{r-q} 2^{n-r}.$$

\smallskip

Let $n$ be even, i.e. $n=2q$ for some $q\in \mathbb{N}$. Recall that either $n\alpha =n-1$ or $n\alpha =n$. If we let 
$$J'_{n,r}=\{ \alpha \in J_{n,r} : n\alpha =n-1 \}\quad \mbox{ and }\quad J''_{n,r} =\{ \alpha \in J_{n,r} : n\alpha =n \},$$
then it is clear that $J_{n,r}$ is a disjoint union of $J'_{n,r}$ and $J''_{n,r}$, and that $J''_{n,r} =\emptyset$ if $r=\frac{n}{2}$. Since $\lvert J'_{n,r} \rvert= \lvert J_{n-1,r} \rvert$ and $\lvert J''_{n,r} \rvert= \lvert J_{n-1,r-1} \rvert$, it follows from the odd case that 
$$\lvert J_{n,r} \rvert= \binom{q-1}{r-q} 2^{n-1-r} + \binom{q-1}{r-1-q} 2^{n-r} = \frac{r}{q} \binom{q}{r-q} 2^{n-r-1}$$
(with the assumption that $\binom{\,\, p\,}{-1} =0$ for every positive integer $p$). 

\smallskip

Furthermore, it is clear that $\lvert J_{n,n-1} \rvert =n-1$ whatever the party of $n$ is. \qed

For each $j\in [n]^{e}$, we define two mappings $\lambda_{1j} : [n] \rightarrow [n]$ and  $\lambda_{2j} : [n] \rightarrow [n]$ (if $j<n$) by
\begin{eqnarray}\label{e1}
	i\lambda_{1j} =\left\{\begin{array}{cl}
		i  & \mbox{if } i\in [n] \setminus \{j\},\\
		j-1 & \mbox{if } i=j
	\end{array}	\right. &\mbox{and}&
	i\lambda_{2j} =\left\{\begin{array}{cl}
		i  & \mbox{if } i\in [n] \setminus \{j\},\\
		j+1 & \mbox{if } i=j.
	\end{array}	\right.
\end{eqnarray}
With these notations, it is easy to verify that 
$$J_{n, n-1}=\{ \lambda_{1j} : j\in [n]^{e}\} \cup \{ \lambda_{2j} : j\in [n]^{e} \setminus \{n\}\}.$$

\begin{lemma} \label{l2} 
	$\mathcal{DF}_{n}$ is a band, i.e., each element of $\mathcal{DF}_{n}$ is an idempotent.
\end{lemma}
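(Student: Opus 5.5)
To prove that every $\alpha \in \mathcal{DF}_{n}$ satisfies $\alpha^{2}=\alpha$, I will show that $\im(\alpha) \subseteq \fix(\alpha)$. This suffices, because then for every $x\in [n]$ the element $x\alpha$ lies in $\im(\alpha)$, hence is fixed, and so $x\alpha^{2}=(x\alpha)\alpha = x\alpha$.

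The argument rests on the observations made just before Proposition \ref{p1}. First, every odd $i$ is minimal in the up-fence, so $i\alpha \preceq i$ forces $i\alpha = i$; hence $[n]^{o} \subseteq \fix(\alpha)$. Second, for an even $j$ the condition $j\alpha \preceq j$ gives $j\alpha \in \{j-1,j,j+1\}\cap [n]$, since $j-1$ and $j+1$ are the only elements strictly below $j$.

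Now take any $y\in \im(\alpha)$, say $y = x\alpha$, and split into two cases. If $y$ is odd, then $y\in [n]^{o} \subseteq \fix(\alpha)$. If $y$ is even, then $x$ cannot be odd, because odd points are fixed and would give $y = x$ odd. So $x$ is even, and $x\alpha \in \{x-1,x,x+1\}$. Of these values only $x$ is even, so $y = x$ and $y\alpha = x\alpha = y$. In both cases $y\in \fix(\alpha)$, which proves $\im(\alpha) \subseteq \fix(\alpha)$ and hence the lemma.

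No step here is a genuine obstacle. The only point that needs care is the parity argument showing that an even image point must be the image of itself. This relies on the fact that the elements strictly below an even $j$ are exactly its odd neighbours $j\pm 1$, so a fence-decreasing map cannot send one even point to a different even point.
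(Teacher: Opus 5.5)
Your proof is correct and is essentially the paper's argument: odd points are fixed, and an even point is either fixed or sent to an odd (hence fixed) neighbour, so $\im(\alpha)\subseteq\fix(\alpha)$ and $\alpha^{2}=\alpha$. The only difference is that you case-split on the image point rather than on the even domain point, which is purely cosmetic.
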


\proof We already noticed that $[n]^{o} \subseteq \fix(\alpha)$ for all  $\alpha \in \mathcal{DF}_{n}$. Let $\alpha \in \mathcal{DF}_{n}$ and let $i\in [n]^{e}$. Since either $i\alpha =i$ or $i\alpha$ is odd, we have $i\alpha \in \fix(\alpha)$. Thus, $i\alpha^{2}=i\alpha$, and so $\alpha$ is an idempotent. \qed

By Lemma \ref{l2}, we conclude that $\mathcal{DF}_{n}$ is a regular semigroup, i.e. for each element $\alpha \in \mathcal{DF}_{n}$, there exists $\beta \in \mathcal{DF}_{n}$ such that $\alpha \beta \alpha =\alpha$, namely $\beta= \alpha$. Moreover, it is clear that $\mathcal{DF}_{n}$ is not commutative, and that $\alpha \in \mathcal{DF}_{n}$ is a left zero if and only if $\im(\alpha) =[n]^{o}$.

\smallskip

If we suppose $[n]$ is a down-fence, then one can notice that
$$[n]^{e} \subseteq \fix(\alpha),\quad \left\lceil \frac{n-1}{2} \right\rceil \leq \lvert \im(\alpha) \rvert \quad \mbox{and}\quad \defect(\alpha) \subseteq [n]^{o}$$ 
for each $\alpha\in \mathcal{DF}_{n}$. Furthermore, if we let $m=\left\lfloor \frac{n-1}{2} \right\rfloor$, then we immediately have 
$$\lvert \mathcal{DF}_{n} \rvert =\left\{ 
\begin{array}{cl}
	4\cdot 3^{m-1} &\mbox{if $n$ is odd,}\\
	2\cdot 3^{m-1} &\mbox{if $n$ is even.}
\end{array}\right.$$
Thus, $\mathcal{DF}_{n}$ changes if we reverse the direction of the fence.

\smallskip

\begin{proposition} \label{p3} 
	Let $q=\left\lfloor \frac{n+1}{2} \right\rfloor$. Then $J_{r} \subseteq \langle J_{r+1} \rangle$ for each $q\leq r\leq n-2$, and so $\mathcal{DF}(n,r) =\langle J_{r}\rangle$ for each $q\leq r\leq n-1$. In particular, $\mathcal{DF}_{n} =\langle J_{n-1}\cup \{1_{n}\}\rangle$.
\end{proposition}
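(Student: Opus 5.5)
Every $\alpha\in\mathcal{DF}_{n}$ is determined by its action on $[n]^{e}$: each even $j$ is either fixed, sent to $j-1$, or (if $j<n$) sent to $j+1$. The plan is to encode $\alpha$ by its set $N(\alpha)=\{j\in[n]^{e} : j\alpha\neq j\}$ of non-fixed even points, so that $|\im(\alpha)|=n-|N(\alpha)|$. The key observation is that for $\alpha,\beta\in\mathcal{DF}_{n}$ whose non-fixed sets are disjoint, the product $\alpha\beta$ is again in $\mathcal{DF}_{n}$ and satisfies $j(\alpha\beta)=j\alpha$ for $j\in N(\alpha)$ and $j(\alpha\beta)=j\beta$ for $j\in N(\beta)$, with all other points fixed. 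This holds because $j\alpha$ is odd for $j\in N(\alpha)$, and odd points are fixed by $\beta$; points outside $N(\alpha)$ are fixed by $\alpha$ and then moved by $\beta$ exactly as $\beta$ moves them. The same computation works for any product whose factors have pairwise disjoint non-fixed sets.

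To show $J_{r}\subseteq\langle J_{r+1}\rangle$ for $q\le r\le n-2$, take $\alpha\in J_{r}$, so $|N(\alpha)|=n-r\ge 2$. Pick two distinct elements $j_1,j_2\in N(\alpha)$. Define $\beta$ to agree with $\alpha$ except that $j_1\beta=j_1$, and $\gamma$ to agree with $\alpha$ except that $j_2\gamma=j_2$. Both are fence-decreasing full transformations, hence lie in $\mathcal{DF}_{n}$, and both have exactly $n-r-1$ non-fixed even points, so $\beta,\gamma\in J_{r+1}$. Then check $\alpha=\beta\gamma$ pointwise. Odd points are fixed by both maps. For $j\in N(\alpha)\setminus\{j_1\}$ we have $j\beta=j\alpha$, which is odd, so $j\beta\gamma=j\alpha$. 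For $j_1$ we have $j_1\beta=j_1$, and $j_1\gamma=j_1\alpha$ since $j_1\neq j_2$. Even points outside $N(\alpha)$ are fixed by both maps. The only thing to verify is that $\beta$ and $\gamma$ are distinct from $\alpha$ and lie in the right level, which is immediate. This is the one place where $r\le n-2$ is needed, since it supplies the second element $j_2$.

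The remaining claims then follow by induction. Descending from $r=n-1$, every element of $J_{s}$ with $q\le s\le r$ lies in $\langle J_{r}\rangle$. Since $\mathcal{DF}(n,r)=\bigcup_{s=q}^{r}J_{s}$ and products cannot increase image size, $\langle J_{r}\rangle\subseteq\mathcal{DF}(n,r)$, which gives $\mathcal{DF}(n,r)=\langle J_{r}\rangle$. Taking $r=n-1$ and adjoining $J_{n}=\{1_{n}\}$ yields $\mathcal{DF}_{n}=\langle J_{n-1}\cup\{1_{n}\}\rangle$. There is no real obstacle here; the only care needed is the pointwise composition check above, and the fact that odd points are fixed makes it straightforward.
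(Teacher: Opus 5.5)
Your proof is correct and is essentially the paper's argument: your set $N(\alpha)$ of moved even points is exactly $\defect(\alpha)$, and the paper likewise resets two distinct points $j,k\in\defect(\alpha)$ to fixed points, obtaining $\beta,\gamma\in J_{r+1}$ with $\alpha=\beta\gamma$. The only difference is that the paper checks $i(\beta\gamma)=i\alpha$ for $i\neq j$ via the band property $\alpha^{2}=\alpha$ (Lemma~\ref{l2}), whereas you check it pointwise using that odd points are fixed.
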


\proof Let $q\leq r\leq n-2$ and let $\alpha \in J_{r}$. Then there exist two distinct elements $j$ and $k$ in $[n]^{e}$ such that $j,k\in \defect(\alpha)$. If we consider the mappings $\beta : [n]\rightarrow [n]$ and $\gamma :[n]\rightarrow [n]$ defined by
\begin{eqnarray*}
	i\beta =\left\{\begin{array}{cl}
		i\alpha &\mbox{if } i\in [n]\setminus \{j\},\\
		j       &\mbox{if } i=j
	\end{array}	\right. &\mbox{and}&
	i\gamma =\left\{\begin{array}{cl}
		i\alpha &\mbox{if } i\in [n]\setminus \{k\},\\
		k       &\mbox{if } i=k,
	\end{array}	\right.
\end{eqnarray*} 
then it is clear that $\beta, \gamma \in J_{r+1}$ and $j\beta \gamma =j\gamma =j\alpha$. Moreover, it follows from Lemma \ref{l2} that $i(\beta \gamma) =(i\alpha) \gamma= i\alpha^{2} =i\alpha$ for every $i\in [n]\setminus \{j\}$. Therefore, we have the equality $\alpha =\beta \gamma$, as required. \qed

Using the notations employed in the proof above, we have the equality $\alpha =\gamma \beta$, as well.

\smallskip

As on chains, one can easily show that $\fix(\beta \gamma )= \fix(\beta) \cap \fix(\gamma)$ for all $\beta, \gamma \in \mathcal{DF}_{n}$ (see, for example \cite[Lemma 1.1]{LU}). Since $\mathcal{DF}_{n}$ is a band, we have $\im(\delta) =\fix(\delta)$ for all $\delta \in \mathcal{DF}_{n}$, and thus,
\begin{eqnarray*}
\im(\beta \gamma) =\im(\beta) \cap \im(\gamma),\, \mbox{ and so }\, \im(\beta \gamma) \subseteq \im(\beta)
\end{eqnarray*}
for all $\beta, \gamma \in \mathcal{DF}_{n}$.

\smallskip

\begin{proposition}\label{p4}
	Each transformation in $\mathcal{DF}_{n}$ is uniquely determined by the kernel and the image. 
\end{proposition}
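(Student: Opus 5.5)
Given $\alpha,\beta\in\mathcal{DF}_{n}$ with $\ker(\alpha)=\ker(\beta)$ and $\im(\alpha)=\im(\beta)$, I will show that $i\alpha=i\beta$ for every $i\in[n]$. The whole argument rests on Lemma~\ref{l2}: since $\mathcal{DF}_{n}$ is a band, every element fixes each point of its image, so $\im(\alpha)=\fix(\alpha)$ and likewise for $\beta$.

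First take $i\in\im(\alpha)=\im(\beta)$. Then $i\in\fix(\alpha)\cap\fix(\beta)$, so $i\alpha=i=i\beta$. In particular this covers all of $[n]^{o}$, and every even $i$ lying in the common image.

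Now take $i\notin\im(\alpha)$. Then $i\in\defect(\alpha)\subseteq[n]^{e}$, so $i\alpha\neq i$, and hence $i\alpha=j$ for some $j\in\{i-1,i+1\}\cap[n]$. Because $j$ is odd, $j\alpha=j$, and so $(i,j)\in\ker(\alpha)=\ker(\beta)$. This gives $i\beta=j\beta$. But $j$ is also odd and therefore fixed by $\beta$, so $i\beta=j=i\alpha$.

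Combining the two cases yields $\alpha=\beta$. The only step needing any care is the use of the kernel on defect points: it requires $i\alpha$ to be a fixed point of $\alpha$, which is exactly what Lemma~\ref{l2} supplies, together with the remark that every odd element is fixed by every member of $\mathcal{DF}_{n}$. Beyond that the argument is immediate, and in fact the image alone fixes $\alpha$ on $\im(\alpha)$ while the kernel fixes it on the remaining points.
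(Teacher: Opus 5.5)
Your proof is correct and uses essentially the same two facts as the paper's proof. Points of the common image are fixed by both maps, and a non-fixed even point $i$ is sent to an odd point fixed by both, so the kernel forces $i\beta=i\alpha$. The paper argues by contradiction at the smallest $i$ with $i\alpha\neq i\beta$, using minimality to obtain $k\alpha=k\beta$ for a $\beta$-preimage $k$ of $i-1$; your direct pairing of $i$ with the odd point $i\alpha$ shows that this minimality step is unnecessary.
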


\proof Assume that there are $\alpha, \beta\in \mathcal{DF}_{n}$ with $\ker(\alpha) =\ker(\beta)$ and $\im(\alpha) =\im(\beta)$ such that $\alpha \neq \beta$. Then there is a smallest integer $i$ in $[n]$ such that $i\alpha \neq i\beta$. It is clear that $i$ is even, and so $i\alpha, i\beta \in \{i-1,i,i+1\}$ if $i\neq n$ and $n\alpha, n\beta \in \{n-1,n\}$ if $i=n$. Without loss of generality, we can assume that $i\alpha \in \{i-1,i\}$. Suppose that $i\alpha =i$. Since $\im(\alpha) =\im(\beta)$, there is $j\in [n]$ such that $j\beta=i$, i.e. $j=i$ since $i$ is even. Thus $i=j\beta =i\beta\neq i\alpha=i$, which is a contradiction. Suppose that $i\alpha=i-1$. Since $\im(\alpha) =\im(\beta)$, there is $k\in \dom(\beta)$ such that $k\beta =i-1$. Since $k$ must be in $\{ i-2,i-1,i\}$, it follows from the choice of $i$ that  $k\alpha =k\beta$, and so we conclude that $k\alpha =k\beta =i-1= i\alpha$. Thus, $(i,k)\in \ker(\alpha) =\ker(\beta)$, and so $i-1\neq i\beta =k\beta =i-1$, which is a contradiction. \qed

\begin{lemma} \label{l5} 	
	Let $\left\lfloor \frac{n+1}{2} \right\rfloor \leq r\leq n-1$. Then each element of $J_{r}$ is undecomposable in $\mathcal{DF}(n,r)$. In particular, each element of $J_{n-1}\cup \{1_{n}\}$ is undecomposable in $\mathcal{DF}_{n}$.
\end{lemma}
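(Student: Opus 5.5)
The plan is to argue through images, using the identity $\im(\beta\gamma)=\im(\beta)\cap\im(\gamma)$ established just before Proposition \ref{p4}, together with the fact that $\mathcal{DF}_{n}$ is a band (Lemma \ref{l2}), so that $\im(\delta)=\fix(\delta)$ for every $\delta\in\mathcal{DF}_{n}$.

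Let $\alpha\in J_{r}$ and suppose $\alpha=\beta\gamma$ with $\beta,\gamma\in\mathcal{DF}(n,r)$. The first step is a size count on images. Since $\im(\alpha)=\im(\beta)\cap\im(\gamma)\subseteq\im(\beta)$, we have $r=|\im(\alpha)|\leq|\im(\beta)|\leq r$. Hence $\im(\beta)=\im(\alpha)$, and by the same reasoning $\im(\gamma)=\im(\alpha)$.

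The second step is to pin down $\beta$ and $\gamma$ pointwise, and I expect this to be the only real work. For each $i\in[n]$ we compute $i\alpha=(i\beta)\gamma$. Now $i\beta\in\im(\beta)=\im(\gamma)=\fix(\gamma)$, so $(i\beta)\gamma=i\beta$. Therefore $i\alpha=i\beta$ for all $i$, which gives $\beta=\alpha$. This contradicts the requirement $\beta\neq\alpha$ in the definition of undecomposable, so every $\alpha\in J_{r}$ is undecomposable in $\mathcal{DF}(n,r)$.

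If one also wants $\gamma=\alpha$, it follows from $\ker(\alpha)\subseteq\ker(\gamma)$ and equal images. Alternatively, in Proposition \ref{p4}, equal images plus $\ker(\beta)\subseteq\ker(\beta\gamma)$ forces equal kernels, since a kernel is determined by the fibres over the image. Either way, the argument $\beta=\alpha$ above already suffices.

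For the \emph{in particular} part, take $r=n-1$. Any factorization $\alpha=\beta\gamma$ of $\alpha\in J_{n-1}$ in $\mathcal{DF}_{n}$ with $\beta,\gamma\neq\alpha$ must have $|\im(\beta)|\leq n-1$: if $|\im(\beta)|=n$, then $\beta=1_{n}$ and $\gamma=\alpha$, which is excluded. The same holds for $\gamma$. So both factors lie in $\mathcal{DF}(n,n-1)$, and the previous argument applies. Finally, $1_{n}$ is undecomposable, because $1_{n}=\beta\gamma$ forces $\im(\beta)=[n]$, hence $\beta=1_{n}$.
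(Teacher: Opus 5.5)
Your proof is correct, but it gets $\beta=\alpha$ by a different mechanism than the paper. The paper, like you, obtains $\im(\beta)=\im(\alpha)$ from $\im(\beta\gamma)\subseteq\im(\beta)$ and the rank bound. It then combines $\ker(\beta)\subseteq\ker(\alpha)$ with equal ranks to get $\ker(\beta)=\ker(\alpha)$, and invokes Proposition \ref{p4} (kernel and image determine an element of $\mathcal{DF}_{n}$).

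You instead note that $\im(\beta)=\im(\alpha)\subseteq\im(\gamma)=\fix(\gamma)$. So $\gamma$ is the identity on $\im(\beta)$ and $\beta\gamma=\beta$ directly. This avoids both the kernel and Proposition \ref{p4}, and is shorter.

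The paper's route pays off in that Proposition \ref{p4}, through its partial analogue Proposition \ref{p11}, is reused for Lemmas \ref{l12}(i) and \ref{l19}(i), where the factors need not be idempotents. Your idea carries over there too if you phrase it with fixed points. For an idempotent $\alpha=\beta\gamma$ in $\mathcal{PDF}(n,r)$ you have $\im(\alpha)=\fix(\alpha)=\fix(\beta)\cap\fix(\gamma)$. The rank bound then gives $\im(\beta)=\im(\alpha)\subseteq\fix(\gamma)$, hence again $\beta\gamma=\beta$, with equal domains. Your handling of the \emph{in particular} part, ruling out a factor equal to $1_{n}$, is also more explicit than the paper's one-line remark.

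One correction: your aside that $\gamma=\alpha$ follows from $\ker(\alpha)\subseteq\ker(\gamma)$ is false. Composition only gives $\ker(\beta)\subseteq\ker(\beta\gamma)$; nothing forces $\ker(\alpha)\subseteq\ker(\gamma)$, and $\gamma$ need not equal $\alpha$. For instance, in $\mathcal{DF}_{5}$ take $\alpha=\lambda_{12}\lambda_{14}$ (so $2\mapsto 1$, $4\mapsto 3$) and $\gamma=\lambda_{22}\lambda_{24}$ (so $2\mapsto 3$, $4\mapsto 5$). Both lie in $J_{3}$ and $\alpha\gamma=\alpha$, yet $\gamma\neq\alpha$. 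Undecomposability only needs $\beta=\alpha$, so simply delete that sentence. Your next sentence, about $\ker(\beta)$ and equal images, is exactly the paper's argument.
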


\proof Let $\alpha \in J_{r}$, and suppose that $\alpha =\beta \gamma $ for some $\beta ,\gamma \in \mathcal{DF}(n,r)$. Since $\ker(\beta) \subseteq \ker(\alpha)$,\, $\im(\alpha) \subseteq \im(\beta)$ and $\lvert \im(\beta) \rvert\leq r=\lvert \im(\alpha) \rvert$, it follows that $\im(\alpha) =\im(\beta)$ and $\ker(\alpha) =\ker(\beta)$. Thus, by Proposition \ref{p4},  we have $\alpha =\beta$, as required. 

\smallskip

The fact that $1_{n}$ is the unique element in $J_{n}$ completes the proof.\qed

\begin{theorem} \label{t6} 
	Let $q=\left\lfloor \frac{n+1}{2} \right\rfloor$. Then $J_{r}$ is the unique minimal generating set of  $\mathcal{DF}(n,r)$ for each $q\leq r\leq n-1$, and $J_{n-1}\cup \{1_{n}\}$ is the unique minimal generating set of  $\mathcal{DF}_{n}$. Therefore, for each $q\leq r\leq n-1$,
	$$\rank(\mathcal{DF}(n,r)) =\idrank( \mathcal{DF}(n,r)) =\left\{ \begin{array}{ll}
		\binom{q-1}{r-q} 2^{n-r} & \mbox{if $n$ is odd,}\vspace{1mm} \\
		\frac{r}{q} \binom{q}{r-q} 2^{n-r-1} & \mbox{if $n$ is even,}
	\end{array}\right.$$
	and $\rank(\mathcal{DF}_{n}) =\idrank( \mathcal{DF}_{n}) =n$.
\end{theorem}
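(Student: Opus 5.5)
The theorem follows by combining the generation result (Proposition \ref{p3}), the undecomposability result (Lemma \ref{l5}), and the counting result (Proposition \ref{p1}). The plan is to first establish uniqueness and minimality of the generating sets, then read off the rank from the cardinalities, and finally observe that everything is idempotent.

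\textbf{Step 1 (ideals).} Fix $q\leq r\leq n-1$. By Proposition \ref{p3}, $\mathcal{DF}(n,r)=\langle J_{r}\rangle$, so $J_{r}$ generates $\mathcal{DF}(n,r)$. By Lemma \ref{l5}, each element of $J_{r}$ is undecomposable in $\mathcal{DF}(n,r)$. As remarked in the introduction, every generating set must therefore contain $J_{r}$. Hence any generating set $A$ satisfies $J_{r}\subseteq A$. If $A$ is minimal, then since $J_{r}$ already generates, $A=J_{r}$. So $J_{r}$ is the unique minimal generating set.

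\textbf{Step 2 (the monoid).} The same argument applies to $\mathcal{DF}_{n}$. Proposition \ref{p3} gives $\mathcal{DF}_{n}=\langle J_{n-1}\cup\{1_{n}\}\rangle$, and Lemma \ref{l5} says that every element of $J_{n-1}\cup\{1_{n}\}$ is undecomposable in $\mathcal{DF}_{n}$. The one point to check is that $1_{n}$ is undecomposable: if $1_{n}=\beta\gamma$, then $\im(1_{n})\subseteq\im(\beta)\cap\im(\gamma)$, which forces $\beta=\gamma=1_{n}$. This is the ``in particular'' part of Lemma \ref{l5}.

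\textbf{Step 3 (rank and idempotent rank).} Since each of these semigroups has a unique minimal generating set, its rank is the cardinality of that set. For the ideals this is $\lvert J_{n,r}\rvert$, which Proposition \ref{p1} gives explicitly. For the monoid it is $\lvert J_{n,n-1}\rvert+1=(n-1)+1=n$. By Lemma \ref{l2}, every element of $\mathcal{DF}_{n}$ is an idempotent, so these generating sets consist of idempotents. Thus the semigroups are idempotent generated, and
$$\idrank\geq\rank=\lvert A\rvert\geq\idrank,$$
so the idempotent rank equals the rank.

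There is no real obstacle. The only point needing care is the logical step from ``contains every undecomposable element and these already generate'' to ``unique minimal generating set''. That step is immediate, because every generating set contains the undecomposables, and those alone suffice to generate.
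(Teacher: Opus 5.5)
Your proposal is correct and follows the paper's proof exactly. The paper likewise obtains the theorem by combining Proposition~\ref{p3} (generation), Lemma~\ref{l5} (undecomposability, so every generating set contains $J_r$, resp.\ $J_{n-1}\cup\{1_n\}$), Proposition~\ref{p1} (the cardinalities, including $\lvert J_{n,n-1}\rvert=n-1$), and Lemma~\ref{l2} (every element is idempotent, so the rank equals the idempotent rank).
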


\proof The results follow from Propositions \ref{p1} and \ref{p3}, and Lemmas \ref{l2} and \ref{l5}. \qed

\section{Fence-decreasing partial transformations on $[n]$}

Let $\alpha\in \mathcal{PDF}_{n}$. For each $i\in [n]^{o}$, we have either $i\notin \dom(\alpha)$ or $i\alpha =i$, and so $[n]^{o} \cap \dom(\alpha) \subseteq \fix(\alpha)$. If $i\in [n]^{e}\setminus \{n\}$, then either $i\notin \dom(\alpha)$ or $i\alpha \in \{i-1,i,i+1\}$, and if $n$ is even, then either $n\notin \dom(\alpha)$ or $n\alpha \in \{n-1,n\}$. Thus, with a simple calculation, we have  
$$\lvert \mathcal{PDF}_{n} \rvert =\left\{ 
\begin{array}{cl}
	2^{3m+1} &\mbox{if $n$ is odd,}\\
	3\cdot 2^{3m+1} &\mbox{if $n$ is even,}
\end{array} \right.$$
where $m=\left\lfloor \frac{n-1}{2} \right\rfloor$. For each $0\leq r\leq n$, let 
$$K_{r}=K_{n,r}=\{ \alpha \in \mathcal{PDF}_{n} : \lvert \im(\alpha) \rvert=r \}.$$
It is clear that $K_{n,n}=J_{n,n}=\{ 1_{n} \}$ and $K_{n,0}=\{ 0_{n} \}$. For $1\leq r\leq n-1$, let $a_{n,r}=\lvert K_{n,r} \rvert$. Since $a_{n,0}= a_{n,n}=1$, we determine the other cardinalities in the following lemma.

\smallskip

\begin{proposition} \label{p8} 
	Let $1\leq r\leq n-1$. Then we have 
	$$a_{n,r} =\left\{ \begin{array}{ll}
		a_{n-1,r}+ a_{n-1,r-1}+ 2a_{n-2,r-1} & \mbox{if $n$ is odd,}\vspace{1mm} \\
		2a_{n-1,r} +a_{n-1,r-1} +a_{n-2,r-1} -a_{n-2,r} & \mbox{if $n$ is even.}
	\end{array}\right.$$
\end{proposition}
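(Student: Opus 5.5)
We prove the recurrence by conditioning on how $\alpha\in K_{n,r}$ behaves on the last one or two points of the fence, and by restricting $\alpha$ to an initial segment $[n-1]$ or $[n-2]$. The key observation is the following. If $k<n$, the fence on $[k]$ is the restriction of the up-fence on $[n]$. Hence a partial transformation $\beta$ of $[k]$ is fence-decreasing on $[k]$ exactly when it is fence-decreasing as a partial transformation of $[n]$ whose domain and image lie in $[k]$. The only subtlety is the top point $k$ when $k$ is even: in $[k]$ it may go only to $k-1$ or $k$, whereas in $[n]$ it could also go to $k+1$. So restriction gives a bijection onto $\mathcal{PDF}_{k}$ once we separately record whether some point is sent outside $[k]$. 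Throughout we use the conventions $a_{k,0}=a_{k,k}=1$ and $a_{k,r}=0$ for $r<0$ or $r>k$, and $\mathcal{PDF}_{1}$ consists of $0_{1},1_{1}$, so that small $n$ (e.g.\ $n=3$, $n-2=1$) are covered.

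\emph{Case $n$ odd.} Here $n$ is minimal, so either $n\notin\dom(\alpha)$ or $n\alpha=n$, and $n-1$ is even with $(n-1)\alpha\in\{n-2,n-1,n\}$ if defined. We split into three disjoint classes.
\begin{itemize}
\item[(i)] $n\notin\dom(\alpha)$ and $(n-1)\alpha\neq n$. Restricting to $[n-1]$ gives a bijection onto $K_{n-1,r}$, contributing $a_{n-1,r}$.
\item[(ii)] $n\alpha=n$ and $(n-1)\alpha\neq n$. The restriction to $[n-1]$ is arbitrary in $\mathcal{PDF}_{n-1}$ and $n$ adds one new image point, contributing $a_{n-1,r-1}$.
\item[(iii)] $(n-1)\alpha=n$. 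Then $n\in\im(\alpha)$ regardless of the two choices for $n$ (undefined or fixed). The restriction to $[n-2]$ is arbitrary and its image misses $n$, contributing $2a_{n-2,r-1}$.
\end{itemize}
Summing the three classes gives the odd formula.

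\emph{Case $n$ even.} Now $n$ is maximal and $n\alpha\in\{n-1,n\}$ or $n\notin\dom(\alpha)$. Let $\beta$ be the restriction of $\alpha$ to $[n-1]$; since nothing maps to $n$ except possibly $n$ itself, $\beta$ ranges over all of $\mathcal{PDF}_{n-1}$.
\begin{itemize}
\item $n\notin\dom(\alpha)$ contributes $a_{n-1,r}$.
\item $n\alpha=n$ contributes $a_{n-1,r-1}$.
\item $n\alpha=n-1$ is the main step, because the image of $\alpha$ is $\im(\beta)\cup\{n-1\}$ and may or may not grow.
\end{itemize}
For the last class, let $b_{s}$ be the number of $\beta\in K_{n-1,s}$ with $n-1\notin\im(\beta)$. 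Then this class contributes $(a_{n-1,r}-b_{r})+b_{r-1}$. We claim $b_{s}=a_{n-2,s}$. Since $n-1$ is odd, $n-1\notin\im(\beta)$ means exactly that $n-1\notin\dom(\beta)$ and $(n-2)\beta\neq n-1$. Such $\beta$ are precisely the elements of $\mathcal{PDF}_{n-2}$, because the even top point $n-2$ is then restricted to $\{n-3,n-2\}$ or undefined. Adding the three contributions gives $2a_{n-1,r}+a_{n-1,r-1}+a_{n-2,r-1}-a_{n-2,r}$, as claimed.

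The only delicate point is this inclusion--exclusion in the even case, together with checking that the boundary conventions make the small cases consistent.
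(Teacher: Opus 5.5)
Your proof is correct and follows essentially the same route as the paper: in the odd case you partition by the preimage of $n$, merging the paper's classes $n\alpha^{-1}=\{n-1\}$ and $n\alpha^{-1}=\{n-1,n\}$ into your class (iii). In the even case, your split of $n\alpha=n-1$ according to whether $n-1\in\im(\beta)$ is exactly the paper's $K^{7}_{n,r}\cup K^{8}_{n,r}$, with the same count $\lvert K^{8}_{n,r}\rvert=a_{n-1,r}-a_{n-2,r}$. Your explicit boundary conventions ($a_{k,r}=0$ for $r>k$, and $\mathcal{PDF}_{1}=\{0_{1},1_{1}\}$) cover edge terms such as $a_{n-2,n-1}$ at $r=n-1$, which the paper leaves implicit.
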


\proof Suppose $n$ is odd. Since $n\alpha=n$ whenever $n\in \dom(\alpha)$ and $(n-2)\alpha \neq n-1$, it follows that $K_{n,r}$ is a disjoint union of the following sets:
$$\begin{array}{lll}
	K^{1}_{n,r} =\{ \alpha \in K_{n,r} : n\alpha^{-1} =\emptyset \},&&
	K^{2}_{n,r} =\{ \alpha \in K_{n,r} : n\alpha^{-1} =\{ n-1\} \}, \vspace*{1mm}\\
	K^{3}_{n,r} =\{ \alpha \in K_{n,r} : n\alpha^{-1} =\{ n\}\}, &\mbox{and}& K^{4}_{n,r} =\{ \alpha \in K_{n,r} : n\alpha^{-1} =\{ n-1,n\}\}.
\end{array}$$
Since $\lvert K^{1}_{n,r} \rvert= \lvert K_{n-1,r} \rvert$,\, $\lvert K^{2}_{n,r} \rvert= \lvert K^{4}_{n,r} \rvert= \lvert K_{n-2,r-1} \rvert$ and $\lvert K^{3}_{n,r} \rvert= \lvert K_{n-1,r-1} \rvert$, it follows that $a_{n,r}= a_{n-1,r}+ a_{n-1,r-1}+ 2a_{n-2,r-1}$.

\smallskip 

Suppose $n$ is even. If $n\in \dom(\alpha)$, then either $n\alpha =n$ or $n\alpha =n-1$. Since $(n-1)\alpha =n-1$ whenever $n-1\in \dom(\alpha)$, it follows that $K_{n,r}$ is a disjoint union of the following sets:
$$\begin{array}{l}
	K^{5}_{n,r} =\{ \alpha \in K_{n,r} : n\notin \dom(\alpha) \}, \qquad 
	K^{6}_{n,r} =\{ \alpha \in K_{n,r} : n\alpha^{-1} =\{n\} \}, \vspace*{1mm}\\
	K^{7}_{n,r} =\{ \alpha \in K_{n,r} : (n-1)\alpha^{-1} =\{n\} \},\, \mbox{ and} \vspace*{1mm}\\
	K^{8}_{n,r} =\{ \alpha \in K_{n,r} : n\in (n-1)\alpha^{-1} \mbox{ and } (n-1)\alpha^{-1} \neq \{n\} \}.
\end{array}$$
It is clear that $\lvert K^{5}_{n,r} \rvert= \lvert K_{n-1,r} \rvert$,\, $\lvert K^{6}_{n,r} \rvert= \lvert K_{n-1,r-1} \rvert$  and  $\lvert K^{7}_{n,r} \rvert= \lvert K_{n-2,r-1} \rvert$. Moreover, since $\alpha \in K^{8}_{n,r}$ if and only if $\alpha_{\mid_{[n-1]}} \in K_{n-1,r}$ with $(n-2)\alpha =n-1$ or $(n-1)\alpha =n-1$, it follows that $\lvert K^{8}_{n,r} \rvert= \lvert K_{n-1,r} \rvert -\lvert K_{n-2,r} \rvert$, and so $a_{n,r} = 2a_{n-1,r} +a_{n-1,r-1} +a_{n-2,r-1} -a_{n-2,r}$, as claimed. \qed

\begin{table}[!ht]	
	\begin{center} \label{Tab:card1}
		\begin{tabular}{c|cccccccc|c}	
			n$\backslash$r  & 0 & 1 & 2 & 3  & 4  & 5 & 6 & 7 & $\lvert\mathcal{PDF}_{n}\rvert$ \\	\hline \vspace*{-5mm}\\
			1  & 1 &   1 &     &     &    &    &   &   &    2  \\
			\hline \vspace*{-5mm}\\
			2  & 1 &   4 &   1 &     &    &    &   &   &    6  \\
			\hline \vspace*{-5mm}\\
			3  & 1 &   7 &   7 &   1 &    &    &   &   &   16  \\
			\hline \vspace*{-5mm}\\
			4  & 1 &  12 &  24 &  10 &  1 &    &   &   &   48  \\
			\hline \vspace*{-5mm}\\
			5  & 1 &  15 &  50 &  48 & 13 &  1 &   &   &   128  \\
			\hline \vspace*{-5mm}\\
			6  & 1 & 20 & 103 & 160 & 83 & 16  & 1 &    &  384  \\
			\hline \vspace*{-5mm}\\
			7  & 1 & 23 & 153 & 363 & 339 & 125 & 19 & 1 & 1024  \\
		\end{tabular}
		\caption{$\lvert K_{n,r} \rvert$ and $\lvert \mathcal{PDF}_{n}\rvert$.}
	\end{center}
\end{table}

\smallskip

\begin{lemma} \label{l9} 
	Each $\alpha \in \mathcal{PDF}_{n}$ is either an idempotent or $\alpha^{3} =\alpha^{2}$, i.e. $\alpha$ is a quasi-idempotent. Moreover, $\alpha\in \mathcal{PDF}_{n}$ is regular if and only if  $\alpha\in E(\mathcal{PDF}_{n})$.
\end{lemma}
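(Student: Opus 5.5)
We track pointwise how an element of the domain moves under iteration, using the local description of $\mathcal{PDF}_{n}$ from the beginning of this section. Let $\alpha\in\mathcal{PDF}_{n}$ and $x\in\dom(\alpha^{2})$. If $x$ is odd, then $x\alpha=x$, so $x\alpha^{k}=x$ for all $k$. If $x$ is even and $x\alpha=x$, the same holds. If $x$ is even and $x\alpha\neq x$, then $x\alpha\in\{x-1,x+1\}$ is odd and lies in $\dom(\alpha)$, since $x\in\dom(\alpha^{2})$; hence $x\alpha^{2}=x\alpha$. In every case $x\alpha^{2}=x\alpha$ for $x\in\dom(\alpha^{2})$.

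It follows that $\alpha^{2}$ is the restriction of $\alpha$ to $\dom(\alpha^{2})=\{x\in\dom(\alpha): x\alpha\in\dom(\alpha)\}$. We now compare $\dom(\alpha^{3})$ with $\dom(\alpha^{2})$. For $x\in\dom(\alpha^{2})$ we have $x\alpha^{2}=x\alpha\in\dom(\alpha)$, so $x\in\dom(\alpha^{3})$, and $x\alpha^{3}=(x\alpha^{2})\alpha=(x\alpha)\alpha=x\alpha^{2}$. Together with $\dom(\alpha^{3})\subseteq\dom(\alpha^{2})$, this gives $\alpha^{3}=\alpha^{2}$. Moreover, $\alpha$ is an idempotent exactly when $\dom(\alpha^{2})=\dom(\alpha)$, that is, when $\im(\alpha)\subseteq\dom(\alpha)$. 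This proves the first assertion.

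For the regularity statement, every idempotent is regular, so only the converse needs work. Suppose $\alpha=\alpha\beta\alpha$ with $\beta\in\mathcal{PDF}_{n}$; we show $\im(\alpha)\subseteq\dom(\alpha)$. Take $y=x\alpha\in\im(\alpha)$. From $x\alpha=x\alpha\beta\alpha$ we get $y\in\dom(\beta)$, $y\beta\in\dom(\alpha)$, and $(y\beta)\alpha=y$. We argue by the parity of $y$.

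If $y$ is odd, then $y\beta=y$ because odd points are fixed by $\beta$ whenever they lie in its domain, so $y\in\dom(\alpha)$.

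If $y$ is even, then $y\beta\in\{y-1,y,y+1\}$. The value $y\beta$ cannot be odd, since an odd $z$ in $\dom(\alpha)$ has $z\alpha=z\neq y$. Therefore $y\beta=y$, and again $y\in\dom(\alpha)$. Hence $\im(\alpha)\subseteq\dom(\alpha)$, so $\alpha$ is an idempotent by the characterization above.

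The only delicate point is the domain bookkeeping for partial maps: one must show that $\dom(\alpha^{3})=\dom(\alpha^{2})$, which is what forces $\alpha^{3}=\alpha^{2}$ rather than merely agreement on a common domain. The parity argument in the regularity step is routine once the local description of fence-decreasing maps is used.
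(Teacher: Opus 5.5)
Your proof is correct and follows essentially the same route as the paper. The paper uses the same pointwise check that $x\alpha^{2}=x\alpha$ on $\dom(\alpha^{2})$ to get $\alpha^{3}=\alpha^{2}$. Your direct proof that regularity forces $\im(\alpha)\subseteq\dom(\alpha)$ is the contrapositive of the paper's argument that an even $j$ with $j\alpha=j\pm 1\notin\dom(\alpha)$ cannot lie in $\dom(\alpha\beta\alpha)$. Your explicit verification that $\dom(\alpha^{3})=\dom(\alpha^{2})$, and the characterization of idempotents by $\im(\alpha)\subseteq\dom(\alpha)$, make precise two points the paper leaves implicit.
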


\proof For every $\alpha \in \mathcal{PDF}_{n} \setminus E(\mathcal{PDF}_{n})$, it is enough to show that $\alpha^{3} =\alpha^{2}$. Let $\alpha \in \mathcal{PDF}_{n} \setminus E(\mathcal{PDF}_{n})$, and let $i\in \dom(\alpha^{2}) \subseteq \dom(\alpha)$. If $i\in \fix(\alpha)= \fix(\alpha^{2})$, then it is clear that $i\alpha^{3} =i\alpha^{2}$. Suppose $i\notin \fix(\alpha)$. Since $i$ must be even and $i\alpha \in [n]^{o} \cap \dom(\alpha)$, we obtain that $i\alpha^{2} =i\alpha$, and so $i\alpha^{3} =i\alpha^{2}$. Thus, $\alpha^{3} =\alpha^{2}$.

\smallskip

Moreover, since idempotents are regular, we consider the non-idempotents elements of $\mathcal{PDF}_{n}$. Let $\alpha\in \mathcal{PDF}_{n} \setminus E(\mathcal{PDF}_{n})$. Then there exists $j\in \dom(\alpha) \cap [n]^{e}$ such that either  $j\alpha =j-1$ and $j-1 \notin \dom(\alpha)$, or $j\alpha =j+1$ and $j+1 \notin \dom(\alpha)$. If we assume $\alpha =\alpha \beta \alpha$ for some  $\beta\in \mathcal{PDF}_{n}$, then we immediately obtain the contradiction $j\notin \dom(\alpha \beta \alpha)$ in both cases. \qed

If we suppose that $[n]$ is a chain, then it is known that $\mathcal{PD}_{n}$ and $\mathcal{D}_{n+1}$ are isomorphic. However, by Lemmas \ref{l2} and \ref{l9}, we can conclude that $\mathcal{PDF}_{n}$ and $\mathcal{DF}_{n+1}$ are not isomorphic.

\smallskip

\begin{proposition} \label{p10} 
	For each $0 \leq r\leq n-2$, we have $K_{r} \subseteq \langle K_{r+1} \rangle$, and so for each $1\leq r\leq n-1$, $\mathcal{PDF}(n,r) =\langle K_{r}\rangle$. Thus, $\mathcal{PDF}_{n} =\langle K_{n-1}\cup \{1_{n}\}\rangle$.
\end{proposition}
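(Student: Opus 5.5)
The plan is to prove $K_{r}\subseteq\langle K_{r+1}\rangle$ for $0\leq r\leq n-2$ directly, by writing each $\alpha\in K_{r}$ as a product $\beta\gamma$ with $\beta,\gamma\in K_{r+1}$, in the spirit of the proof of Proposition \ref{p3}. The remaining assertions then follow formally. Iterating gives $K_{s}\subseteq\langle K_{r}\rangle$ for all $s\leq r$, so $\mathcal{PDF}(n,r)=\langle K_{r}\rangle$ for $1\leq r\leq n-1$. Taking $r=n-1$ and adding $1_{n}$, the only element of $K_{n}$, gives $\mathcal{PDF}_{n}=\langle K_{n-1}\cup\{1_{n}\}\rangle$.

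Fix $\alpha\in K_{r}$ with $r\leq n-2$. Then $\defect(\alpha)$ contains at least two distinct points $a,b$. The basic tool is the partial identity $1_{\im(\alpha)\cup\{b\}}$, which lies in $K_{r+1}$ and fixes $\im(\alpha)$ pointwise.

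\emph{Case 1: some $a\in\defect(\alpha)$ satisfies $a\notin\dom(\alpha)$.} Let $\beta=\alpha\cup\{(a,a)\}$. It is fence-decreasing and $\im(\beta)=\im(\alpha)\cup\{a\}$, so $\beta\in K_{r+1}$. Choose $b\in\defect(\alpha)\setminus\{a\}$ and set $\gamma=1_{\im(\alpha)\cup\{b\}}\in K_{r+1}$. For $x\in\dom(\alpha)$ we get $x\beta\gamma=x\alpha$. On the other hand $a\beta=a\notin\dom(\gamma)$, since $a\neq b$ and $a\notin\im(\alpha)$. Hence $\alpha=\beta\gamma$.

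\emph{Case 2: every defect point lies in $\dom(\alpha)$.} Then every defect point is even and moved by $\alpha$, since a point in the domain but not in the image cannot be fixed. Take $a\in\defect(\alpha)$, so $c:=a\alpha\in\{a-1,a+1\}$ is odd, and take another defect point $b$.
\begin{itemize}
\item[(i)] Suppose $c$ has a preimage under $\alpha$ other than $a$. Let $\beta$ agree with $\alpha$ except that $a\beta=a$. Then $\im(\beta)=\im(\alpha)\cup\{a\}$, so $\beta\in K_{r+1}$. Let $\gamma=1_{\im(\alpha)\cup\{b\}}\cup\{(a,c)\}$. It is fence-decreasing, has image $\im(\alpha)\cup\{b\}$, and so lies in $K_{r+1}$. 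Since $b\notin\im(\beta)$, a direct check gives $\beta\gamma=\alpha$.
\item[(ii)] Suppose $a$ is the only preimage of $c$. Then $c\notin\dom(\alpha)$, since otherwise $c\alpha=c$ would give a second preimage. This is the subcase I expect to be the main obstacle. The right-factor trick fails here: making $c$ a fixed point of $\beta$ forces $c$ into the domain of the product.
\end{itemize}

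For subcase (ii) I would first try the mirror-image factorization $\alpha=\gamma\beta$, with the rank-raising factor on the left. Take $\gamma$ to be a fence-decreasing map in $K_{r+1}$ with domain $\dom(\alpha)$ that agrees with $\alpha$ except for some adjustment on $a$. Take $\beta$ to be a partial identity, enlarged by the pair $(a,c)$, chosen so that it absorbs the extra image point of $\gamma$; for this, use $b$ or an odd point outside $\im(\alpha)$ to raise the rank of $\beta$ to $r+1$. If that does not close the case uniformly, I would run a secondary case split on whether both chosen defect points $a,b$ are in situation (ii). In that case $c=a\alpha$ and $d=b\alpha$ are both odd points outside $\dom(\alpha)$, and I would redirect $a$ and $b$ in the two factors separately. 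The aim is that in $\beta\gamma$ each of $a,b$ is sent to its correct image by exactly one factor, while each factor has image of size exactly $r+1$. Finally, the case $r=0$ ($\alpha=0_{n}$) is already covered by Case 1: every point is then outside $\dom(\alpha)$, and the construction yields $0_{n}=\beta\gamma$ with $\beta,\gamma\in K_{1}$.
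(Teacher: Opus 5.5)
Your Case 1 and Case 2(i) are correct. Case 1 slightly generalizes the paper's first case, and (i) is the paper's own construction. However, subcase (ii) is left unproved, and that is exactly where the difficulty lies.

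The concrete plan you sketch for (ii) cannot work. Suppose $\gamma$ has domain $\dom(\alpha)$ and agrees with $\alpha$ off $a$. Because $a$ is the only preimage of $c$, we get $\im(\gamma)\subseteq(\im(\alpha)\setminus\{c\})\cup\{a\gamma\}$. Hence $\lvert\im(\gamma)\rvert\leq r$ and $\gamma\notin K_{r+1}$. The proposed secondary case split is a hope, not an argument, so as it stands the proof is incomplete.

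The missing idea, which is how the paper proceeds, is that you only need $\alpha\in\langle K_{r+1}\rangle$, not a two-factor decomposition, and subcase (ii) then reduces to Case 1. Keep the factorization $\alpha=\beta\gamma$ from (i) unchanged. Neither the identity $\beta\gamma=\alpha$ nor the fact $\gamma=1_{\im(\alpha)\cup\{b\}}\cup\{(a,c)\}\in K_{r+1}$ used the extra preimage of $c$. The only change is that now $\im(\beta)=(\im(\alpha)\setminus\{c\})\cup\{a\}$, so $\beta\in K_{r}$. But $c\in\defect(\beta)$, $c\notin\dom(\beta)=\dom(\alpha)$, and $\beta$ has $n-r\geq 2$ defect points. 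So your Case 1 gives $\beta\in\langle K_{r+1}\rangle$, hence $\alpha\in\langle K_{r+1}\rangle$. This is the paper's branch ``$j\in\dom(\alpha)$ and $j\alpha\notin\dom(\alpha)$''.

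Alternatively, your claim that the right-factor trick fails is too hasty. Put $c$ into the domain of $\beta$ but remove it from the domain of $\gamma$:
\begin{itemize}
\item[] $\beta'=\beta\cup\{(c,c)\}$, which is legitimate since $c\notin\dom(\alpha)$, has image $\im(\alpha)\cup\{a\}$;
\item[] $\gamma'=1_{(\im(\alpha)\setminus\{c\})\cup\{b\}}\cup\{(a,c)\}$ has image $\im(\alpha)\cup\{b\}$.
\end{itemize}
Both lie in $K_{r+1}$, and $\beta'\gamma'=\alpha$. Indeed, $c\notin\dom(\gamma')$ (as $c$ is odd and $b$ is even), $\beta'$ maps $\dom(\alpha)\setminus\{a\}$ into $\im(\alpha)\setminus\{c\}$ where $\gamma'$ is the identity, and $a\beta'\gamma'=a\gamma'=c$. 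So the direct two-factor approach you intended also goes through.
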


\proof Let $0 \leq r\leq n-2$ and let $\alpha \in K_{r}$. Suppose $\defect(\alpha) \cap [n]^{o}\neq \emptyset$. Then we let $j\in \defect(\alpha) \cap [n]^{o}$, and so $j\notin \dom(\alpha)$, and let $k\in \defect(\alpha) \setminus \{j\}$. If we define the mappings $\beta : \dom(\alpha)\cup \{j\} \rightarrow [n]$ and $\gamma : \im(\alpha)\cup \{k\} \rightarrow [n]$ by
\begin{eqnarray*}
	i\beta =\left\{\begin{array}{cl}
		i\alpha &\mbox{if } i\in \dom(\alpha),\\
		j       &\mbox{if } i=j
	\end{array}	\right. &\mbox{ and }&
	i\gamma =\left\{\begin{array}{cl}
		i &\mbox{if } i\in \im(\alpha),\\
		k       &\mbox{if } i=k,
	\end{array}	\right.
\end{eqnarray*} 
then it clear that $\beta, \gamma \in K_{r+1}$ and $\beta \gamma =\alpha$. 

\smallskip

Suppose $\defect(\alpha) \cap [n]^{o}= \emptyset$, i.e. $[n]^{o} \subseteq \im(\alpha)$ and  $\defect(\alpha) \subseteq [n]^{e}$. Let $j$ and $k$ be two distinct elements of $\defect(\alpha)$ with $j<k$, and let 
\begin{eqnarray*}
	U=\dom(\alpha) \cup \{j\} &\mbox{and}&
	V= \left\{\begin{array}{ll}
		\im(\alpha)\cup \{j,k\} &\mbox{if } j\in \dom(\alpha),\\
		\im(\alpha)\cup \{k\} &\mbox{if } j\notin \dom(\alpha).
	\end{array}	\right.
\end{eqnarray*}
Then we define the mappings $\beta : U \rightarrow [n]$ and $\gamma : V \rightarrow [n]$ by
\begin{eqnarray*}
	i\beta =\left\{\begin{array}{cl}
		i\alpha &\mbox{if } i\in U\setminus \{j\},\\
		j       &\mbox{if } i=j
	\end{array}	\right. &\mbox{and}&
	i\gamma =\left\{\begin{array}{cl}
		i       &\mbox{if } i\in \im(\alpha),\\
		j\alpha &\mbox{if } i=j \mbox{ and } j\in \dom(\alpha),\\
		k       &\mbox{if } i=k.
	\end{array}	\right.
\end{eqnarray*} 
First, it is clear that $\gamma \in K_{r+1}$ and $\beta \gamma =\alpha$. Moreover, if $j\notin \dom(\alpha)$, or if  $j, j\alpha \in \dom(\alpha)$, then it is also clear that  $\beta \in K_{r+1}$. If  $j\in \dom(\alpha)$ and $j\alpha \notin \dom(\alpha)$, then $j\alpha \in \defect(\beta) \cap [n]^{o}$, and so by the first part of the proof, $\beta \in \langle K_{r+1}\rangle$, which completes the proof. \qed

We omit the proof of the following proposition since the proof is almost the same with the proof of Proposition \ref{p4}. 

\smallskip

\begin{proposition}\label{p11}
	Each transformation in $\mathcal{PDF}_{n}$ is uniquely determined by the kernel and the image. \hfill $\square$
\end{proposition}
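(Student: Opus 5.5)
The plan is to repeat the argument of Proposition \ref{p4} almost verbatim. The only new point is that we first record that the kernel of a partial transformation determines its domain. Indeed, $(x,x)\in\ker(\alpha)$ if and only if $x\in\dom(\alpha)$. So if $\alpha,\beta\in\mathcal{PDF}_{n}$ satisfy $\ker(\alpha)=\ker(\beta)$ and $\im(\alpha)=\im(\beta)$, then $\dom(\alpha)=\dom(\beta)$. Suppose $\alpha\neq\beta$. Then there is a smallest $i\in\dom(\alpha)$ with $i\alpha\neq i\beta$.

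We next use the fence structure. Every odd element of a common domain is fixed by both maps, so $i$ is even. Hence $i\alpha,i\beta\in\{i-1,i,i+1\}$, and both lie in $\{n-1,n\}$ if $i=n$. Since the two values are distinct, at least one of them lies in $\{i-1,i\}$. By interchanging the roles of $\alpha$ and $\beta$ we may assume $i\alpha\in\{i-1,i\}$.

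\emph{Case $i\alpha=i$.} Then $i\in\im(\beta)$, say $j\beta=i$. An odd $j$ is fixed by $\beta$, so it cannot map to the even number $i$. An even $j$ is mapped by $\beta$ either to itself or to an odd number. Hence $j=i$, so $i\beta=i=i\alpha$, a contradiction.

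\emph{Case $i\alpha=i-1$.} Then $i-1\in\im(\beta)$, so $k\beta=i-1$ for some $k\in\dom(\beta)\cap\{i-2,i-1,i\}$. Since $i\beta\neq i\alpha=i-1$, we have $k\neq i$, hence $k<i$. By minimality of $i$ we get $k\alpha=k\beta=i-1=i\alpha$, using $\dom(\alpha)=\dom(\beta)$. Thus $(i,k)\in\ker(\alpha)=\ker(\beta)$, which gives $i\beta=k\beta=i-1$, a contradiction.

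The only step needing care is the observation that domains coincide. This is what allows the minimality argument, which was written for full maps, to be applied to $k$ as an element of both domains. Once that is in place, the two cases are identical to those in Proposition \ref{p4}.
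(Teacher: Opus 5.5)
Your proof is correct and is the adaptation of the proof of Proposition \ref{p4} that the paper intends when it omits this proof. The two points you make explicit are exactly what the partial setting needs: the kernel contains the diagonal of the domain, so $\dom(\alpha)=\dom(\beta)$, and $k\neq i$, so minimality applies to a preimage $k\in\{i-2,i-1\}$ even when $i-1\notin\dom(\alpha)$.
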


Notice that for $1\leq r\leq n-1$, not all elements of $K_{r}$ are undecomposable in $\mathcal{PDF}(n,r)$. For  example, we have $\alpha= \left(\begin{matrix}
	1&2&3&4\\
	1&1&3&5
\end{matrix}\right) = \left(\begin{matrix}
	1&2&3&4\\
	1&1&3&4
\end{matrix}\right) \left(\begin{matrix}
	1&3&4\\
	1&3&5
\end{matrix}\right)$ in $\mathcal{PDF}(5,3)$. However, it is easy to see that 
$\left(\begin{matrix}
2&4&6&8\\
3&3&7&7
\end{matrix}\right)$ 
is undecomposable in $\mathcal{PDF}(8,2)$.  

\smallskip 

For $0\leq r\leq n$, let
$$K^{*}_{r} =K^{*}_{n,r} =\{ \alpha \in K_{r} : \sh(\alpha) \neq \emptyset \mbox{ and } j\alpha^{-1} =\{j-1,j+1\} \mbox{ for all } j\in \sh(\alpha) \}.$$
Note that $j\in \sh(\alpha)$ implies $j\notin \dom(\alpha)$ and $j\in [n]^{o} \setminus \{1,n\}$ for any $\alpha \in K_{r}^{*}$. It is clear that $K^{*}_{n,r} =\emptyset$ for every $0\leq r\leq n\leq 3$, and that $K^{*}_{0} =K^{*}_{n-1} =K^{*}_{n} =\emptyset$ for every $n\geq 4$.

\smallskip 

\begin{lemma} \label{l12} 
	\begin{itemize}
		\item [$(i)$] Let $1\leq r\leq n$. Then each element of $E(K_{r})$ is undecomposable in $\mathcal{PDF}(n,r)$. 
		\item [$(ii)$] Let $n\geq 4$ and $1\leq r\leq n-2$. Then each element of $K^{*}_{r}$ is undecomposable in $\mathcal{PDF}(n,r)$. 
	\end{itemize}
\end{lemma}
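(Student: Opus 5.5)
For both parts I will assume $\alpha=\beta\gamma$ with $\beta,\gamma\in\mathcal{PDF}(n,r)$ and show that $\alpha\in\{\beta,\gamma\}$. Two general facts get used repeatedly. First, $\dom(\alpha)\subseteq\dom(\beta)$, the kernel of $\beta$ restricted to $\dom(\alpha)$ is contained in $\ker(\alpha)$, and $\im(\alpha)\subseteq\im(\gamma)$. Second, since everything is fence-decreasing, every odd $i$ in a domain is fixed, and an even $i$ goes to $i$ or to $i\pm1$.

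For part $(i)$, let $\alpha\in E(K_r)$. As $\alpha$ is idempotent, $\im(\alpha)\subseteq\dom(\alpha)$ and $\im(\alpha)=\fix(\alpha)$. For $x\in\fix(\alpha)$ we have $x=x\beta\gamma$. Because $\beta$ and $\gamma$ are decreasing, $x\preceq x\beta\preceq x$ would follow if $x\beta\gamma$ were comparable in the right way. Concretely, if $x\beta\ne x$, then either $x$ is even and $x\beta$ is odd, which forces $x\beta\gamma=x\beta\neq x$, or $x$ is odd, which forces $x\beta=x$. Hence $\fix(\alpha)\subseteq\fix(\beta)\cap\fix(\gamma)$. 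In particular $\im(\alpha)\subseteq\im(\beta)$, and since $\lvert\im(\beta)\rvert\le r=\lvert\im(\alpha)\rvert$, we get $\im(\beta)=\im(\alpha)$. Every image point of $\beta$ is then fixed by $\gamma$, so $\beta\gamma=\beta$ on $\dom(\beta)$, and $\dom(\alpha)=\dom(\beta\gamma)=\dom(\beta)$. Therefore $\alpha=\beta$, and no appeal to Proposition \ref{p11} is needed.

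For part $(ii)$, let $\alpha\in K^{*}_r$. Its shift points $j$ are odd, satisfy $j\notin\dom(\alpha)$, and have $j\alpha^{-1}=\{j-1,j+1\}$; all other image points are fixed. Take $j\in\sh(\alpha)$. Since $(j-1)\beta\gamma=j$, each of $(j\pm1)\beta$ lies in $\{j\pm1,\, j\}$ together with possibly other odd neighbours. I will argue case by case that $(j\pm1)\beta\in\{j\pm1,j\}$ and that $\gamma$ sends it to $j$. A value of $\beta$ that is odd and different from $j$ would be fixed by $\gamma$, giving the wrong value. So either $(j-1)\beta=j$, or $(j-1)\beta=j-1$ and $(j-1)\gamma=j$, and similarly for $j+1$. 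Now split into two cases. If $\beta$ sends both $j\pm1$ to $j$ for every shift point $j$, then $\beta$ restricted to $\dom(\alpha)$ agrees with $\alpha$ on the non-fixed part, and on the fixed points of $\alpha$ the argument from $(i)$ gives $\beta$ fixed there. Since $\dom(\beta)$ could be larger than $\dom(\alpha)$, I expect to show that $\beta\gamma=\alpha$ forces $\dom(\beta)\supseteq\dom(\alpha)$ with the image count giving $\im(\beta)=\im(\alpha)$. Then extra domain points of $\beta$ must map into $\im(\alpha)$, and I would check whether they would survive in $\dom(\beta\gamma)$; if they do, that contradicts $\dom(\alpha)$.

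Otherwise, some $j-1$ or $j+1$ is fixed by $\beta$ and mapped to $j$ by $\gamma$. I will then count the image of $\beta$: it contains $\fix(\alpha)$, and for each shift point $j$ it contains $j$ or one or two of $j\pm1$. The image bound $\lvert\im(\beta)\rvert\le r$ gives exactly one image element per shift point. So if $\beta$ keeps $j-1$ fixed, it must also send $j+1$ to $j-1$, which is impossible because $j+1\to j-1$ is not fence-decreasing. Hence both $j\pm1$ are fixed by $\beta$, which would cost two image slots, contradicting the count unless $j\in\dom(\beta)$ is avoided. This forces every shift point to be handled by $\gamma$ with $\beta$ an idempotent-like partial identity on $\dom(\alpha)$, namely $\beta=1_{\dom(\alpha)}$ up to the count. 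Then $\lvert\im(\beta)\rvert=\lvert\dom(\alpha)\rvert>r$, because each shift point has two preimages. This contradiction leaves only the first case, so $\alpha=\beta$.

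The main obstacle is the mixed situation in $(ii)$, where $\beta$ collapses some pairs $\{j-1,j+1\}$ and leaves others to $\gamma$, together with a possibly enlarged $\dom(\beta)$. I would treat it by the precise count $\lvert\im(\beta)\rvert\ge\lvert\fix(\alpha)\rvert+\sum_{j\in\sh(\alpha)}c_j$, with $c_j\in\{1,2\}$ and $c_j=2$ whenever $\beta$ does not merge $j-1$ and $j+1$. Combined with $\lvert\im(\beta)\rvert\le r$, this forces $c_j=1$ for all $j$, i.e. $\beta$ merges every pair, and then $\alpha=\beta$ follows.
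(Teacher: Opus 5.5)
Your proposal is correct in substance. For part $(ii)$ it follows the paper's line: rule out $(j-1)\beta=j-2$ because $\gamma$ fixes odd points, use $\lvert\im(\beta)\rvert\le r$ to rule out $(j-1)\beta=j-1$ (and symmetrically for $j+1$), and conclude $\im(\beta)=\im(\alpha)\subseteq\fix(\gamma)$, hence $\beta=\alpha$. Your closing count $\lvert\im(\beta)\rvert\ge\lvert\fix(\alpha)\rvert+\sum_j c_j$ is a more explicit version of the paper's informal phrase ``to avoid changing the size of the image set''.

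For part $(i)$ you take a genuinely different, more direct route. The paper defers to the proof of Lemma~\ref{l5}: compare kernels and images, then invoke Proposition~\ref{p11}. For partial maps that step also needs $\dom(\beta)=\dom(\alpha)$, which ``$\ker(\beta)\subseteq\ker(\alpha)$'' glosses over. You instead show $\fix(\alpha)\subseteq\fix(\beta)\cap\fix(\gamma)$, get $\im(\beta)=\im(\alpha)$ by cardinality, and conclude $\beta\gamma=\beta$ because $\gamma$ is the identity on $\im(\beta)$. This avoids Proposition~\ref{p11}, handles domains automatically, and is the same mechanism the paper uses to finish $(ii)$, so your two parts become uniform.

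Three things to tidy in $(ii)$:
\begin{itemize}
\item Delete the paragraph beginning ``Otherwise''. It contains false intermediate claims: ``both $j\pm1$ are fixed by $\beta$'', ``$\beta=1_{\dom(\alpha)}$ up to the count'', and ``$\lvert\im(\beta)\rvert=\lvert\dom(\alpha)\rvert$''. Your final count already does the job. Since $(j-1)\beta\in\{j-1,j\}$ and $(j+1)\beta\in\{j,j+1\}$, you have $c_j=1$ exactly when both values equal $j$.
\item Justify that the contributions in the count are disjoint. Separate the elements of $\im(\beta)$ by their $\gamma$-images: points of $\fix(\alpha)$ are fixed by $\gamma$, while each $(j\pm1)\beta$ is sent by $\gamma$ to the shift point $j$, and $j\notin\fix(\alpha)$.
\item Close case 1 exactly as in $(i)$: every point of $\im(\beta)=\im(\alpha)$ is fixed by $\gamma$, so $\beta=\beta\gamma=\alpha$. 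Do not rely on ``$\beta$ agrees with $\alpha$ on the non-fixed part''. That phrase skips even $i\in\dom(\alpha)$ with $i\alpha\in\fix(\alpha)$ and $i\alpha\neq i$, which only the global identity $\beta\gamma=\beta$ covers.
\end{itemize}
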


\proof We omit the proof of $(i)$ since the proof is almost the same with the proof of Lemma \ref{l5}.

Let $\alpha \in K^{*}_{r}$, and let $j\in \sh(\alpha)$, i.e. $(j-1)\alpha =j= (j+1)\alpha$ and $j\notin \dom(\alpha)$. Suppose that $\alpha =\beta \gamma$ for some $\beta, \gamma \in \mathcal{PDF}(n,r)$. Then it is clear that $\fix(\alpha) \subseteq \fix(\gamma)$,\, $\fix(\alpha) \subseteq \im(\beta)$, and that $j-1,j+1\in \dom(\beta) \cap [n]^{e}$. Since $(j-1)\beta =(j-2)$ yields the contradiction $j=(j-1) \alpha =(j-1)\beta \gamma =(j-2)\gamma \neq j$, we have $(j-1)\beta \in \{j-1,j\}$. Assume that $(j-1)\beta =j-1$. Since $j-1\notin \im(\alpha)$, but $j-1\in \im(\beta)$, to avoid changing the size of the image set, we must have $(j+1)\beta =j+2$ and $j+2\in \im(\alpha)$, which gives the contradiction $j =(j+1)\alpha =(j+1) \beta \gamma =(j+2) \gamma \neq j$. Thus, we conclude that $(j-1)\beta =j= (j-1)\alpha$.   

\smallskip

One can similarly show that $(j+1)\beta =j= (j+1)\alpha$. Therefore, we have $\im(\beta) = \im(\alpha) \subseteq \fix(\gamma)$ and $\beta_{\mid_{\dom(\alpha)}} =\alpha$. Since $i\beta =(i\beta) \gamma =i\alpha$ for each $i\in \dom(\beta)$, it follows that $\dom(\beta)=\dom(\alpha)$, and so $\beta= \alpha$, as required. \qed

Notice that if $\left\lfloor \frac{n+1}{2} \right\rfloor \leq r\leq n-1$, then it is clear that $J_{r} \subseteq E(K_{r})$. Let $\alpha \in\mathcal{PDF}_{n}$, and let
\begin{eqnarray*}
	&& A_{\alpha} =\{ j\in \dom(\alpha) \cap [n]^{e} : j\alpha \notin \dom(\alpha)\} \mbox{ and}\\
	&& A_{\alpha}^{*} =\{ j\in A_{\alpha} : \lvert (j\alpha) \alpha^{-1} \rvert=1\}.
\end{eqnarray*}
Then it is clear that $\alpha \in \mathcal{PDF}_{n}$ is idempotent if and only if $A_{\alpha} =\emptyset$. 

\smallskip

\begin{proposition} \label{p13} 
	Let $1\leq r\leq n-2$ and let $\alpha\in K_{r}$. Then $\alpha \in \langle E(K_{r}) \cup K^{*}_{r}\rangle$.
\end{proposition}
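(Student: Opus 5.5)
The plan is induction on $\lvert A_{\alpha}\rvert$. If $A_{\alpha}=\emptyset$, then $\alpha$ is an idempotent, so $\alpha\in E(K_{r})$ and there is nothing to prove. So let $A_{\alpha}\neq\emptyset$. For $j\in A_{\alpha}$ write $t_{j}=j\alpha\in\{j-1,j+1\}$; this is odd, lies outside $\dom(\alpha)$, and belongs to $\sh(\alpha)$. Since $t_{j}$ is odd, its possible preimages are only $j-1$ and $j+1$, so $t_{j}\alpha^{-1}$ is one of $\{j\}$, $\{j-1,j+1\}$ (and $t_{j}$ itself is not a preimage). I split $A_{\alpha}$ into $A^{*}_{\alpha}$ (one preimage) and $A_{\alpha}\setminus A^{*}_{\alpha}$ (two preimages). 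The shifts of the second kind are exactly the pattern allowed in $K^{*}_{r}$. The aim is to remove the points of $A^{*}_{\alpha}$ one at a time by factorizations through elements of rank $r$, and then to split off the second kind as a single factor in $K^{*}_{r}$.

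\emph{Step 1 (removing $j\in A^{*}_{\alpha}$).} Here $t=t_{j}$ has the single preimage $j$ and $t\notin\dom(\alpha)$. I will try to write $\alpha=\beta\gamma$ with $\beta,\gamma\in K_{r}$, as in the proof of Proposition~\ref{p10}. The candidate is as follows. Let $\beta$ agree with $\alpha$ on $\dom(\alpha)\setminus\{j\}$ and put $j\beta=j$. Let $\gamma$ be the partial identity on $\im(\alpha)\setminus\{t\}$ extended by $j\gamma=t$. Then $\beta\gamma=\alpha$, both maps are fence-decreasing, and both have image of size $r$, because $t$ is traded for $j$. Moreover $A_{\beta}=A_{\alpha}\setminus\{j\}$, so $\beta$ is covered by induction. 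The factor $\gamma$ has $A_{\gamma}=\{j\}$ with $\lvert A^{*}_{\gamma}\rvert=1$. So the induction must also dispose of this elementary case. I will handle it separately: choose an odd or even defect point $k\notin\im(\gamma)$, which exists since $r\le n-2$, and write $\gamma$ as a product of two rank-$r$ idempotents. Extend the domain by $t$ with $t\mapsto t$, and use the extra defect $k$ to keep the rank equal to $r$, in the spirit of the first part of the proof of Proposition~\ref{p10}.

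\emph{Step 2 (when $A^{*}_{\alpha}=\emptyset$).} Now every $j\in A_{\alpha}$ has $t_{j}\alpha^{-1}=\{t_{j}-1,t_{j}+1\}$. Let $S=\{t_{j}:j\in A_{\alpha}\}\subseteq\sh(\alpha)$. Let $\varepsilon$ be the restriction of $\alpha$ to $\dom(\alpha)\setminus\alpha^{-1}(S)$, extended by letting both $t-1$ and $t+1$ map to $t-1$ for each $t\in S$. Let $\kappa$ be the partial identity on $\im(\alpha)\setminus S$ extended by $(t-1)\kappa=t$ for each $t\in S$. Then $\varepsilon$ is an idempotent of rank $r$, and it lies in $\mathcal{PDF}_{n}$ because $t-1$ is even and $t+1\succ t-1$ fails... so I would instead choose the common image of $t\pm1$ inside $\varepsilon$ to be an element of $\{t-1,t+1\}$ that is decreasing for both, or keep $t$ but adjoin $t$ to the domain as a fixed point. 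Then $\varepsilon$ is idempotent with image $\im(\alpha)$, and $\alpha=\kappa'\varepsilon$ with $\kappa'\in K^{*}_{r}$ carrying exactly the shifts of $S$. The checking reduces to $\alpha=\varepsilon\kappa$ or $\kappa\varepsilon$ pointwise, plus the verification that $\kappa$ lies in $K^{*}_{r}$.

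\emph{Main obstacle.} The hard part is keeping every factor of rank \emph{exactly} $r$ while respecting fence-decreasingness. An even point can only map to itself or an odd neighbour, which forces the ``detour'' constructions above. I expect the elementary case of Step 1, with a single isolated shift, to be the delicate one. There the hypothesis $r\le n-2$ must be used to find a spare defect point, and a small case analysis is needed according to whether that spare point is odd or even. This mirrors the two cases in the proof of Proposition~\ref{p10}.
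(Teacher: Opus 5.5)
Your Step 1 is essentially the paper's factorization, but you made it harder than necessary. The paper takes $\gamma$ to be the identity on all of $\im(\alpha)$, together with $j\gamma=t$. This includes $t=j\alpha$, which lies in $\im(\alpha)$ but not in $\dom(\alpha)$. Then $\im(\gamma)=\im(\alpha)$, so $\gamma$ is already an idempotent in $E(K_{r})$. So there is no ``elementary case'', no spare defect point $k$ is needed, and $r\leq n-2$ is not used. With your $\gamma$, whose domain is $(\im(\alpha)\setminus\{t\})\cup\{j\}$, the repair is $\gamma=1_{\dom(\gamma)}\hat{\gamma}$, where $\hat{\gamma}$ is the paper's $\gamma$. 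Both factors lie in $E(K_{r})$ because $\lvert \dom(\gamma)\rvert=r$. Your sketch via the extra defect $k$ is too vague to check and is not what is needed.

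The genuine gap is Step 2. Once $A_{\alpha}\neq\emptyset$ and $A^{*}_{\alpha}=\emptyset$, nothing remains to be factored:
\begin{itemize}
\item Even points of the image and odd points of the domain are fixed, so $\sh(\alpha)=\{j\alpha : j\in A_{\alpha}\}\neq\emptyset$.
\item Each $t\in\sh(\alpha)$ is odd and has $t\alpha^{-1}\subseteq\{t-1,t+1\}$ with exactly two elements.
\end{itemize}
Hence $\alpha\in K^{*}_{r}$ by definition, and this is exactly how the paper's induction on $\lvert A^{*}_{\alpha}\rvert$ terminates.

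The construction you give instead does not work:
\begin{itemize}
\item Your first $\varepsilon$ sends $t+1$ to the incomparable point $t-1$, so $\varepsilon\notin\mathcal{PDF}_{n}$. Your fallback of a common image in $\{t-1,t+1\}$ fails for the same reason.
\item Your $\kappa$ has $t\kappa^{-1}=\{t-1\}$, so $\kappa\notin K^{*}_{r}$; all its shifts are of your first kind.
\item $\kappa'$ is never defined, so $\alpha=\kappa'\varepsilon$ is unsupported. If $\varepsilon$ is $\alpha$ extended by $t\mapsto t$ for $t\in S$, the only evident choice is $\kappa'=\alpha$, which is circular.
\end{itemize}
Replace Step 2 by the observation above. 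A minor slip: in the set-up, the two-preimage case is $t_{j}\alpha^{-1}=\{t_{j}-1,t_{j}+1\}$, not $\{j-1,j+1\}$.
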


\proof  Let $\alpha\in K_{r}\setminus (E(K_{r}) \cup K^{*}_{r})$. Then it is clear that $A_{\alpha}$ is not empty. For each $j\in A_{\alpha}$, we first notice that $j\alpha \in [n]^{o}$ and that if $j\alpha=j-1$, then $(j\alpha) \alpha^{-1} \subseteq \{ j-2,j\}$, and if $j\alpha=j+1$, then $(j\alpha) \alpha^{-1} \subseteq \{ j,j+2\}$. Thus, $1\leq \lvert (j\alpha) \alpha^{-1} \rvert \leq 2$ for each $j\in A_{\alpha}$. Now it is clear that $\alpha\in K^{*}_{r}$ if and only if $\lvert (j\alpha) \alpha^{-1} \rvert=2$ for all $j\in A_{\alpha}$, or equivalently $A_{\alpha}^{*} =\emptyset$.

\smallskip

Now assume that $A_{\alpha}^{*}$ is not empty. Then there is $j\in A_{\alpha}^{*}$ such that $j\alpha \in [n]^{o}$, and $i\alpha \neq j\alpha$ for all $i\in \dom(\alpha) \setminus \{j\}$. Thus, if we define the mappings $\beta : \dom(\alpha) \rightarrow [n]$ and $\gamma : \im(\alpha) \cup \{j\} \rightarrow [n]$ by
\begin{eqnarray*}
	i\beta =\left\{\begin{array}{cl}
		i\alpha &\mbox{if } i\in \dom(\alpha)\setminus \{j\},\\
		j       &\mbox{if } i=j
	\end{array}	\right. &\mbox{and}&
	i\gamma =\left\{\begin{array}{cl}
		i		&\mbox{if } i\in \im(\alpha),\\
		j\alpha &\mbox{if } i=j,
	\end{array}	\right.
\end{eqnarray*} 
then it is clear that $\beta\in K_{r}$,\, $\gamma \in E(K_{r})$ and $\alpha =\beta \gamma$. Thus, since $\lvert A_{\beta}^{*} \rvert= \lvert A_{\alpha}^{*}  \rvert -1 $, the proof is now completed by induction on $\lvert A_{\alpha}^{*}  \rvert$. \qed

For each $0\leq r\leq n$, let
$$E_{r}=\{ 1_{Y} : Y \mbox{ is a subset of } [n] \mbox{ with size }r\}.$$
It is clear that $E_{0}=\{ 0_{n}\}$,\, $E_{n}=\{ 1_{n}\}$,\, $E_{r} \subseteq E(K_{r})$, and that $\lvert E_{r} \rvert =\binom{n}{r}$ for each $0\leq r\leq n$. 

\smallskip

Let $e_{n}$ denote the number of idempotents in $\mathcal{PDF}_{n}$. Moreover, for $0\leq r\leq n$, let $e_{n,r}$ denote the number of idempotents in $K_{n,r}$, i.e. $e_{n,r}= \lvert E(K_{n,r}) \rvert$. Since $e_{n,0}=e_{n,n}=1$, we determine the other values of $e_{n,r}$ in the following proposition and in Table 2.

\smallskip

\begin{proposition} \label{p14} 
	Let $1\leq r\leq n-2$. Then we have 
	$$e_{n,r} =\left\{ \begin{array}{ll}
		e_{n-1,r}+ e_{n-1,r-1} +e_{n-2,r-1} & \mbox{if $n$ is odd,}\vspace{1mm} \\
		e_{n-1,r}+ e_{n-1,r-1} +e_{n-2,r-1} +e_{n-3,r-1} & \mbox{if $n$ is even.}
	\end{array}\right.$$
Moreover, we have $e_{n,n-1} =2n-1$ whatever the party of $n$ is, and 
$$e_{n} =\left\{ \begin{array}{ll}
	2e_{n-1}+ e_{n-2} & \mbox{if $n$ is odd,}\vspace{1mm} \\
	2e_{n-1}+ e_{n-2} +e_{n-3} & \mbox{if $n$ is even.}
\end{array}\right.$$
\end{proposition}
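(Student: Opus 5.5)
The plan is to derive both recurrences from a single case analysis on the behaviour of the largest points of $[n]$. The first step is a concrete description of $E(\mathcal{PDF}_{n})$. By the remark preceding Proposition \ref{p13}, $\alpha$ is idempotent if and only if $A_{\alpha}=\emptyset$. Concretely, $\alpha\in\mathcal{PDF}_{n}$ is an idempotent exactly when:
\begin{itemize}
\item each odd $i$ is either outside $\dom(\alpha)$ or fixed;
\item each even $j$ is either outside $\dom(\alpha)$, fixed, or sent to a neighbour $j\pm 1$ that lies in $\dom(\alpha)$.
\end{itemize}
The constraints are local: only adjacent points interact. Moreover, $\lvert\im(\alpha)\rvert$ equals the number of fixed points, since every image point is fixed.

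For $n$ odd, I split by the status of $n$. If $n\notin\dom(\alpha)$, then $n-1$ cannot be sent to $n$, and restricting to $[n-1]$ gives a bijection with $E(K_{n-1,r})$; this contributes $e_{n-1,r}$. If $n\in\dom(\alpha)$, then $n$ is fixed and contributes one image point. There are two subcases, according to whether $(n-1)\alpha=n$ or not.
\begin{itemize}
\item If $(n-1)\alpha\neq n$, then $n-1$ is absent, fixed, or sent to $n-2$ with $n-2\in\dom(\alpha)$. These are exactly the idempotent configurations on $[n-1]$, so restriction gives a bijection with $E(K_{n-1,r-1})$.
\item If $(n-1)\alpha=n$, then the restriction to $[n-2]$ is an arbitrary idempotent with $r-1$ image points. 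This gives $e_{n-2,r-1}$.
\end{itemize}
Adding the three contributions yields the first formula.

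For $n$ even, $n\alpha\in\{n-1,n\}$ when $n\in\dom(\alpha)$. The cases $n\notin\dom(\alpha)$ and $n\alpha=n$ give $e_{n-1,r}$ and $e_{n-1,r-1}$ exactly as before. In the remaining case $n\alpha=n-1$, the point $n-1$ must lie in $\dom(\alpha)$, so it is fixed and contributes one image point. I then split on $n-2$.
\begin{itemize}
\item If $n-2$ is absent, fixed, or sent to $n-3$ (which forces $n-3\in\dom(\alpha)$), the restriction to $[n-2]$ is an arbitrary idempotent of rank $r-1$. This gives $e_{n-2,r-1}$.
\item If $(n-2)\alpha=n-1$, the restriction to $[n-3]$ is an arbitrary idempotent of rank $r-1$. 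This gives $e_{n-3,r-1}$.
\end{itemize}
Adding the four contributions yields the second formula.

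The main care point is the bookkeeping of the dependency between $n-1$ and its neighbours, so that each restriction map is a genuine bijection onto the stated set with the right rank. One also needs the convention $e_{m,s}=0$ for $s<0$ or $s>m$ and $e_{0,0}=1$, so that the smallest values of $n$ are covered. The hypothesis $r\leq n-2$ keeps all indices in range.

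For $e_{n,n-1}$, I count directly. An idempotent of rank $n-1$ has exactly $n-1$ fixed points. If it is not full, it must be a partial identity $1_{[n]\setminus\{i\}}$, which gives $n$ maps. If it is full, it lies in $J_{n,n-1}$, which has $n-1$ elements by Proposition \ref{p1}. Hence $e_{n,n-1}=2n-1$.

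Finally, the formula for $e_{n}$ follows by repeating the same case decomposition without fixing the rank, or equivalently by summing the rank-refined counts over all $r$. This gives $e_{n}=e_{n-1}+e_{n-1}+e_{n-2}$ for $n$ odd, with the extra term $e_{n-3}$ for $n$ even.
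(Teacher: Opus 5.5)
Your proposal is correct and matches the paper's proof. Your case split on the preimage of $n$ (and, for even $n$, on whether $(n-2)\alpha=n-1$ when $n\alpha=n-1$) is exactly the paper's decomposition into $E^{1}_{n,r},\dots,E^{7}_{n,r}$ with the same restriction bijections, and $e_{n,n-1}=2n-1$ is counted the same way as $n$ partial identities plus the $n-1$ elements of $J_{n,n-1}$; your explicit description of the idempotents just makes the paper's ``it can be easily seen'' bijections transparent.
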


\proof By Lemma \ref{l2}, we notice that $J_{n-1} \subseteq E(K_{n-1})$. Since an element in $E(K_{n-1}) \setminus J_{n-1}$ must be a partial identity, it follows from Proposition \ref{p1} that $e_{n,n-1} =2n-1$.

\smallskip

Let $1\leq r\leq n-2$ and suppose $n$ is odd. Since $n\alpha=n$ whenever $n\in \dom(\alpha)$, it follows that $E(K_{n,r})$ is a disjoint union of the following sets:
\begin{eqnarray*}
	&&E^{1}_{n,r} =\{ \alpha \in E(K_{n,r}) : n\notin \dom(\alpha) \},\\
	&&E^{2}_{n,r} =\{ \alpha \in E(K_{n,r}) : n\alpha^{-1} =\{n\}\,\}, \mbox{ and}\\
	&&E^{3}_{n,r} =\{ \alpha \in E(K_{n,r}) : n\alpha^{-1} =\{n-1,n\}\,\}. 
\end{eqnarray*}
Since it can be easily seen that $\lvert E^{1}_{n,r} \rvert= \lvert E(K_{n-1,r}) \rvert$,\,  $\lvert E^{2}_{n,r} \rvert= \lvert E(K_{n-1,r-1})\rvert$, and $\lvert E^{3}_{n,r} \rvert= \lvert E(K_{n-2,r-1}) \rvert$, 
we conclude that $e_{n,r} =e_{n-1,r}+ e_{n-1,r-1} +e_{n-2,r-1}$. 

\smallskip

Suppose $n$ is even. Since $n\alpha=n$ or $n\alpha=n-1$  whenever $n\in \dom(\alpha)$, it follows that $E(K_{n,r})$ is a disjoint union of the following sets:
\begin{eqnarray*}
	&&E^{4}_{n,r} =\{ \alpha \in E(K_{n,r}) : n\notin \dom(\alpha)\},\\
	&&E^{5}_{n,r} =\{ \alpha \in E(K_{n,r}) : n\alpha^{-1} =\{n\} \},\\
	&&E^{6}_{n,r} =\{ \alpha \in E(K_{n,r}) : (n-1)\alpha^{-1} =\{n-1,n\}\}, \mbox{ and}\\
	&&E^{7}_{n,r} =\{ \alpha \in E(K_{n,r}) : (n-1)\alpha^{-1} =\{n-2,n-1,n\}\}. 
\end{eqnarray*}
Since it can be easily seen that $\lvert E^{4}_{n,r} \rvert= \lvert E(K_{n-1,r}) \rvert$,\,  $\lvert E^{5}_{n,r} \rvert= \lvert E(K_{n-1,r-1}) \rvert$,\, $\lvert E^{6}_{n,r} \rvert=  \lvert E(K_{n-2,r-1}) \rvert$, and $\lvert E^{7}_{n,r} \rvert= \lvert E(K_{n-3,r-1}) \rvert$, we conclude that $e_{n,r} =e_{n-1,r}+ e_{n-1,r-1} +e_{n-2,r-1} +e_{n-3,r-1}$. 

\smallskip

Moreover, if $n$ is odd, then $e_{n}= 2e_{n-1}+ e_{n-2}$ since $E(\mathcal{PDF}_{n})$ is a disjoint union of the following sets:
$$\begin{array}{ll}
	\{ \alpha \in E(\mathcal{PDF}_{n}) : n\notin \dom(\alpha) \}, &
	\{ \alpha \in E(\mathcal{PDF}_{n}) : n\alpha^{-1} =\{n\}\},\, \mbox{ and}\vspace{1mm}\\
	\{ \alpha \in E(\mathcal{PDF}_{n}) : n\alpha^{-1} =\{n-1,n\}\}, &
\end{array}$$
and if $n$ is even, then $e_{n}= 2e_{n-1}+ e_{n-2}+ e_{n-3}$ since $E(\mathcal{PDF}_{n})$ is a disjoint union of the following sets:
$$\begin{array}{l}
	\{ \alpha \in E(\mathcal{PDF}_{n}) : n\notin \dom(\alpha) \}, \qquad
	\{ \alpha \in E(\mathcal{PDF}_{n}) : n\alpha^{-1} =\{n\}\,\},\\
	\{ \alpha \in E(\mathcal{PDF}_{n}) : (n-1)\alpha^{-1} =\{n-1,n\}\},\, \mbox{ and}\\
	\{ \alpha \in E(\mathcal{PDF}_{n}) : (n-1)\alpha^{-1} =\{n-2, n-1,n\}\}, 
\end{array}$$
as claimed. \qed

Especially, as can be easily calculated, we have $e_{n,1} =\left\{ \begin{array}{ll}
	5m & \mbox{if $n$ is odd,}\vspace{1mm} \\
	5m-2 & \mbox{if $n$ is even,}
\end{array}\right.$
where $m= \lfloor \frac{n}{2}\rfloor$.

\smallskip

\begin{table}[!ht]	
	\begin{center} \label{Tab:card2}
		\begin{tabular}{c|cccccccc|c}	
			n$\backslash$r  & 0 & 1 & 2 & 3  & 4  & 5 & 6 & 7 & $\lvert E(\mathcal{PDF}_{n})\rvert$ \\	\hline \vspace*{-5mm}\\
			1  & 1 &   1 &     &     &    &    &   &   &    2  \\
			\hline \vspace*{-5mm}\\
			2  & 1 &   3 &   1 &     &    &    &   &   &    5  \\
			\hline \vspace*{-5mm}\\
			3  & 1 &   5 &   5 &   1 &    &    &   &   &   12  \\
			\hline \vspace*{-5mm}\\
			4  & 1 &  8 &  14 &  7 &  1 &    &   &   &   31  \\
			\hline \vspace*{-5mm}\\
			5  & 1 &  10 &  27 &  26 & 9 &  1 &   &   &   74  \\
			\hline \vspace*{-5mm}\\
			6  & 1 & 13 & 50 & 72 & 43 & 11  & 1 &    &  191  \\
			\hline \vspace*{-5mm}\\
			7  & 1 & 15 & 73 & 149 & 141 & 63 & 13 & 1 & 456 \\
		\end{tabular}
		\caption{$\lvert E(K_{n,r}) \rvert$ and $\lvert E(\mathcal{PDF}_{n})\rvert$.}
	\end{center}
\end{table}

\begin{theorem} \label{t15} 
	Let $1\leq r\leq n-1$. Then $E(K_{r}) \cup K^{*}_{r}$ is the unique minimal generating set of $\mathcal{PDF}(n,r)$. In particular, $E_{n-1}\cup J_{n-1}\cup \{1_{n}\}$ is the unique minimal generating set of $\mathcal{PDF}_{n}$, and so $\rank(\mathcal{PDF}_{n}) =\idrank( \mathcal{PDF}_{n}) =2n$.
\end{theorem}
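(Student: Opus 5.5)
The proof combines two things already established: generation by $E(K_r)\cup K^{*}_r$, and the fact that these elements are undecomposable. For generation, let $1\leq r\leq n-1$. By Proposition \ref{p10}, $\mathcal{PDF}(n,r)=\langle K_r\rangle$, so it suffices to see $K_r\subseteq\langle E(K_r)\cup K^{*}_r\rangle$. For $r\leq n-2$ this is Proposition \ref{p13}. For $r=n-1$ we have $K^{*}_{n-1}=\emptyset$, so we must show that every $\alpha\in K_{n-1}$ is an idempotent. If $\alpha$ is not idempotent, then $A_\alpha\neq\emptyset$, so some even $j\in\dom(\alpha)$ has $j\alpha\notin\dom(\alpha)$. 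Then $j\notin\im(\alpha)$ (as $j$ even, only $j$ can map to $j$, and it doesn't) and the odd point $j\alpha$ has... we count defect: $j\in\defect(\alpha)$, and $j\alpha\notin\dom(\alpha)$ means $j\alpha$ is hit only from even neighbours. Rather than counting, I will simply observe: $|\im(\alpha)|=n-1$ forces $|\dom(\alpha)|\geq n-1$ and $\alpha$ injective off one pair or full. If $\alpha$ is full it lies in $\mathcal{DF}_n$ and is idempotent by Lemma \ref{l2}. If $|\dom(\alpha)|=n-1$ then $\alpha$ is injective, hence (being fence-decreasing and injective, with odd points fixed) it must be a partial identity: an even $i$ mapped to $i\pm1$ would collide with $i\pm1$ unless $i\pm1\notin\dom(\alpha)$, but then the missing point is $i\pm1$ and image $\im(\alpha)$ misses $i$ and contains $i\pm1$, and a short check shows this is impossible only if... here I expect the one delicate point: the map $1_{[n]\setminus\{i+1\}}$ with $i\mapsto i+1$ is injective with image of size $n-1$ and is not idempotent. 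So $K_{n-1}$ need not consist of idempotents, and generation at $r=n-1$ must instead be handled by writing such an element as a product of idempotents of $K_{n-1}$: e.g. $\alpha=1_{[n]\setminus\{i+1\}}\lambda_{2i}$ composed appropriately, i.e. $\alpha=\varepsilon\lambda$ with $\varepsilon=1_{\dom(\alpha)}\in E_{n-1}$ and $\lambda\in J_{n-1}$ the full map sending $i$ to $i\pm1$. This gives $K_{n-1}\subseteq\langle E(K_{n-1})\rangle$.

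For minimality and uniqueness, I invoke Lemma \ref{l12}. Part $(i)$ gives that every element of $E(K_r)$ is undecomposable in $\mathcal{PDF}(n,r)$, and part $(ii)$ does the same for $K^{*}_r$ when $r\leq n-2$ (for $n\leq3$, $K^{*}_r=\emptyset$ anyway). Every generating set must contain all undecomposable elements. So $E(K_r)\cup K^{*}_r$ is contained in every generating set and itself generates, which makes it the unique minimal generating set.

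For $\mathcal{PDF}_n$, Proposition \ref{p10} gives $\mathcal{PDF}_n=\langle K_{n-1}\cup\{1_n\}\rangle=\langle E(K_{n-1})\cup\{1_n\}\rangle$. Next I identify $E(K_{n-1})=E_{n-1}\cup J_{n-1}$, as in the proof of Proposition \ref{p14}: a non-full idempotent of rank $n-1$ has a domain of size $n-1$ and is injective, hence a partial identity. The elements of $E(K_{n-1})$ are undecomposable in $\mathcal{PDF}_n$. To see this, suppose such an element equals $\beta\gamma$ with $\beta,\gamma\neq$ it. Then $\beta$ and $\gamma$ have rank at least $n-1$, so neither can be $1_n$, and hence both lie in $\mathcal{PDF}(n,n-1)$. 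Lemma \ref{l12}(i) then applies. Also $1_n$ is undecomposable, since it is the only element of rank $n$. Finally, by Proposition \ref{p1}, $|E_{n-1}|+|J_{n-1}|+1=n+(n-1)+1=2n$, and all these generators are idempotents. This gives $\rank=\idrank=2n$.

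The main obstacle is the $r=n-1$ generation step, where the non-idempotent injective elements of $K_{n-1}$ must be factored through $E_{n-1}$ and $J_{n-1}$. I must also double-check that such a factor $\lambda$ indeed sends the relevant even point in the right direction.
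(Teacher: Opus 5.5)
Your proposal is correct and is essentially the paper's proof. Generation comes from Propositions \ref{p10} and \ref{p13}, undecomposability from Lemma \ref{l12}, and the one extra case, a non-idempotent $\alpha\in K_{n-1}$, is handled by the paper's factorization $\alpha=1_{\dom(\alpha)}\gamma$, where $\gamma\in J_{n-1}$ is the full map that agrees with $\alpha$ on $\dom(\alpha)$ and fixes the missing point. In a write-up, delete the false start claiming that every element of $K_{n-1}$ is idempotent, and justify $\beta,\gamma\neq 1_n$ by $\beta,\gamma\neq\alpha$ (since $1_n\gamma=\gamma$) rather than by the rank bound.
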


\proof By Propositions \ref{p10} and \ref{p13}, and Lemma \ref{l12}, it is enough to show that if $\alpha \in K_{n-1} \setminus (E_{n-1} \cup J_{n-1})$, then $\alpha \in \langle E_{n-1} \cup J_{n-1} \rangle$. Let  $\alpha$ be any element of $K_{n-1} \setminus (E_{n-1} \cup J_{n-1})$. Then it is clear that $\lvert \dom(\alpha) \rvert =n-1$, and so $\alpha$ is a partial injection. Then let $\beta =1_{\dom(\alpha)}$, which is an element of $E_{n-1}$. If we define the mapping $\gamma : [n]\rightarrow [n]$ by 
$$i\gamma =\left\{ \begin{array}{ll}
	i\alpha &\mbox{if } i\in \dom(\alpha),\\
	   i    &\mbox{if } i\notin \dom(\alpha),
\end{array}\right.$$
then it is clear that $\gamma \in J_{n-1}$ and $\alpha =\beta \gamma$, as required. \qed

\section{Fence-decreasing and fence-preserving partial transformations on $[n]$}

First, recall that $[n]^{o} \subseteq \fix(\alpha)$ for all $\alpha \in \mathcal{DF}_{n}$. Now we give a result, which was obtained earlier. 

\smallskip

\begin{corollary}\label{c17} \cite[Corollary 2.2]{F1}
	Let $\alpha\in \mathcal{F}_{n}$. Then $\im(\alpha)$ is an interval, i.e. $\im(\alpha) =[p,q]=\{ p,p+1, \ldots ,q\}$ for some $1\leq p\leq q\leq n$. \hfill $\square$
\end{corollary}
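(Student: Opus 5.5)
The statement is that for a full fence-preserving transformation $\alpha\in\mathcal{F}_{n}$ the image is an interval $[p,q]$ of $[n]$. I will argue by connectivity: the image of the fence under an order-preserving map is connected in the comparability graph, and connected subsets of a fence are intervals.

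\emph{Step 1: the comparability graph.} The comparability graph of the fence $[n]$ has an edge between $x$ and $y$ precisely when $x\prec y$ or $y\prec x$. In a fence this happens exactly when $|x-y|=1$, since every element is minimal or maximal and the only covering pairs are consecutive integers. So this graph is the path $1-2-\cdots-n$.

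\emph{Step 2: $\alpha$ maps edges to edges or loops.} If $|i-(i+1)|=1$, then $i$ and $i+1$ are comparable. Because $\alpha$ is fence-preserving, $i\alpha$ and $(i+1)\alpha$ are comparable or equal. By Step 1 this forces $|i\alpha-(i+1)\alpha|\leq 1$.

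\emph{Step 3: the image has no gaps.} Set $p=\min\im(\alpha)$ and $q=\max\im(\alpha)$, attained at $a\alpha=p$ and $b\alpha=q$. Consider the sequence $a\alpha,(a+1)\alpha,\ldots,b\alpha$, or the reversed sequence if $b<a$. By Step 2 consecutive terms differ by at most $1$. The sequence runs from $p$ to $q$, so by a discrete intermediate value argument it takes every integer value in $[p,q]$. Since $p$ and $q$ are the extreme values of the image, $\im(\alpha)=[p,q]$.

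The only point needing care is Step 1: the claim that comparability in a fence means exactly $|x-y|\le 1$. This holds because the order is generated by the covering relations between consecutive integers, and no chains of length greater than $2$ exist, so $\prec$ has no transitive consequences beyond these covers. Everything else is routine.
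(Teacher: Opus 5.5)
Your proof is correct and complete. Consecutive points of the fence are comparable, so their images are equal or adjacent. Walking from a preimage of $\min\im(\alpha)$ to a preimage of $\max\im(\alpha)$ then hits every value in between. That last step is exactly where fullness of $\alpha$ is needed, since the claim fails for partial maps (e.g.\ $\im(1_{\{1,3\}})=\{1,3\}$).

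The paper gives no argument of its own and simply imports the result from \cite{F1}. Your route via the adjacency property $|i\alpha-(i+1)\alpha|\leq 1$ is the natural self-contained way to obtain it.
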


If $n$ is odd, then it follows from Corollary \ref{c17} that $\alpha \in \mathcal{DF}_{n}$ is fence-preserving if and only if $\alpha =1_{n}$. If $n$ is even, then $\alpha$ is fence-preserving if and only if $\alpha \in\{ 1_{n}, \lambda_{1n}\}$, as defined in (\ref{e1}).

\smallskip

Furthermore, let $[n]$ be a down-fence. Then, for each $j\in [n]^{o}$, we define the mappings $\mu_{1j} : [n] \rightarrow [n]$ (if $j\neq 1$) and  $\mu_{2j} : [n] \rightarrow [n]$ (if $j\neq n$) by
\begin{eqnarray*}
	i\mu_{1j} =\left\{\begin{array}{cl}
		i  & \mbox{if } i\in [n] \setminus \{j\},\\
		i-1 & \mbox{if } i=j,
	\end{array}	\right. &\mbox{and}&
	i\mu_{2j} =\left\{\begin{array}{cl}
		i  & \mbox{if } i\in [n] \setminus \{j\},\\
		i+1 & \mbox{if } i=j.
	\end{array}	\right.
\end{eqnarray*}
With these notations, one can show that 
$$J_{n,n-1}=\{ \mu_{1j} : j\in [n]^{o} \setminus \{1\} \} \cup \{ \mu_{2j} : j\in [n]^{o} \setminus \{n\} \}.$$
Moreover, if $[n]$ is a down-fence and $n$ is odd, then $\alpha\in \mathcal{DF}_{n}$ is fence-preserving if and only if $\alpha \in\{1_{n}, \mu_{1n}, \mu_{21}, \mu_{1n}\mu_{21}\}$, and if $n$ is even, then $\alpha$ is fence-preserving if and only if $\alpha \in\{1_{n}, \mu_{21}\}$. Thus, with these notations, if we let $\mathcal{CF}_{n}=\mathcal{PCF}_{n} \cap \mathcal{T}_{n}$, then we have the following corollary.

\smallskip

\begin{corollary}\label{c18}
	If $[n]$ is an up-fence, then $\mathcal{CF}_{n}=\left\{ \begin{array}{cl}
		\{1_{n}\} &\mbox{if $n$ is odd,}\\
		\{1_{n}, \lambda_{1n} \} &\mbox{if $n$ is even,}
	\end{array}\right.$ 
	and if $[n]$ is a down-fence, then $\mathcal{CF}_{n}=\left\{ \begin{array}{cl}
		\{1_{n}, \mu_{1n}, \mu_{21}, \mu_{1n}\mu_{21}\} &\mbox{if $n$ is odd,}\\
		\{1_{n}, \mu_{21}\} &\mbox{if $n$ is even.}
	\end{array}\right.$ \hfill $\square$
\end{corollary}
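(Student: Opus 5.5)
The statement is Corollary \ref{c18}. For the up-fence it follows directly from Corollary \ref{c17} together with the description of $\mathcal{DF}_{n}$. For the down-fence I would run the same argument with the roles of odd and even points exchanged. Throughout, note that $\mathcal{CF}_{n}=\mathcal{PCF}_{n}\cap\mathcal{T}_{n}=\mathcal{F}_{n}\cap\mathcal{DF}_{n}$. So we must determine which full fence-decreasing maps are fence-preserving.

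\emph{Up-fence.} Let $\alpha\in\mathcal{DF}_{n}$. Then $[n]^{o}\subseteq\im(\alpha)$, so $1,n-1\in\im(\alpha)$ when $n$ is even, and $1,n\in\im(\alpha)$ when $n$ is odd. If $\alpha\in\mathcal{F}_{n}$, Corollary \ref{c17} says $\im(\alpha)$ is an interval. Hence $\im(\alpha)\supseteq[1,n-1]$ if $n$ is even, and $\im(\alpha)=[n]$ if $n$ is odd.

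Since $\defect(\alpha)\subseteq[n]^{e}$ and a fence-decreasing full map with full image is the identity (use Proposition \ref{p4}, or directly: an even $i$ lies in the image only if $i\alpha=i$), the odd case gives $\alpha=1_{n}$. In the even case the only other possibility is $\defect(\alpha)=\{n\}$. The unique element of $J_{n-1}$ with this defect and with $\im(\alpha)=[1,n-1]$ is $\lambda_{1n}$, the map with $n\mapsto n-1$.

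Conversely, both maps are fence-preserving. This is trivial for $1_{n}$. For $\lambda_{1n}$ the only relation involving $n$ is $n-1\prec n$, which maps to $n-1\preceq n-1$.

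\emph{Down-fence.} Here $[n]^{e}\subseteq\fix(\alpha)$ and $\defect(\alpha)\subseteq[n]^{o}$. An interior odd point $j$ goes to $j\pm1$ or to itself, $1$ goes to $1$ or $2$, and $n$ (if odd) goes to $n$ or $n-1$. If $\alpha$ is fence-preserving, $\im(\alpha)$ is an interval containing $[n]^{e}$, so it contains $[2,n-1]$ when $n$ is odd and $[2,n]$ when $n$ is even. Thus every interior odd point lies in the image, hence is fixed, and the only possible defects are $1$, and $n$ when $n$ is odd.

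This leaves $1\mapsto 1$ or $2$, i.e.\ $1_{n}$ or $\mu_{21}$, and for odd $n$ additionally $n\mapsto n$ or $n-1$, i.e.\ $\mu_{1n}$. For odd $n$ the four combinations are $1_{n}$, $\mu_{21}$, $\mu_{1n}$ and $\mu_{1n}\mu_{21}$. Each is fence-preserving, since the only relations affected are $1\succ2$ and $n\succ n-1$, which become equalities.

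The main point needing care is the claim that once the image is forced to contain an odd point $j$, that point is fixed. This holds because $j\alpha^{-1}\subseteq\{j\}$: even points are fixed and cannot map to $j$. The remaining verifications are routine.
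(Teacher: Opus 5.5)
Your proposal is correct and follows the same route as the paper, which also derives the corollary directly from Corollary \ref{c17} (the image of a fence-preserving full map is an interval) combined with the description of fence-decreasing full maps. Your argument fills in the steps the paper leaves implicit: the interval property forces the image to contain $[1,n-1]$ (resp.\ $[2,n-1]$ or $[2,n]$ for the down-fence), any maximal point in the image of a fence-decreasing map must be fixed, and Corollary \ref{c17} may be applied to the down-fence because a map preserves an order exactly when it preserves the dual order.
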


We consider fence-decreasing and fence-preserving partial transformations on the up-fence $[n]$, namely $\mathcal{PCF}_{n}$, from now on. Note that $\mathcal{PCF}_{n}$ is a submonoid of $\mathcal{PDF}_{n}$ and proper one whenever $n\geq 4$. Let $\alpha \in \mathcal{PCF}_{n}$ and let $j\in \dom(\alpha) \cap [n]^{e}$. Then we notice that $j+1 \notin \dom(\alpha)$ whenever $j\alpha =j-1$, and that $j-1\notin \dom(\alpha)$ whenever $j\alpha =j+1$. Moreover, by Lemma \ref{l9}, each element $\alpha\in \mathcal{PCF}_{n}$ satisfies the equation $\alpha^{3}= \alpha^{2}$, and by Proposition \ref{p11}, each transformation in $\mathcal{PDF}_{n}$ is uniquely determined by the kernel and the image. 

\smallskip

For each $0\leq r\leq n$, let 
$$L_{r}=L_{n,r}=\{ \alpha \in \mathcal{PCF}_{n} : \lvert \im(\alpha) \rvert =r\}.$$
Clearly, $L_{0} =\{ 0_{n}\}$ and $L_{n} =\{ 1_{n}\}$. Moreover, it is clear that if $n$ is odd, then $L_{n-1} =E_{n-1}$, and that if $n$ is even, then $L_{n-1} =E_{n-1} \cup\{ \lambda_{1n}, \pi\}$, as defined in (\ref{e1}), where 
\begin{eqnarray}\label{e2}
	\pi =\left(\begin{matrix}
	1&2&\cdots &n-2& n\\
	1&2&\cdots &n-2&n-1
\end{matrix}\right).
\end{eqnarray}
Since $\pi= 1_{\dom(\pi)} \lambda_{1n}$, we see that $\pi$ is not undecomposable in $\mathcal{PCF}(n,n-1)$. 

For each $1\leq r\leq n$, let $L^{*}_{r}  =L_{r} \cap K^{*}_{r}$, i.e.
$$L^{*}_{r} =\{ \alpha \in L_{r} : \sh(\alpha) \neq \emptyset \mbox{ and } j\alpha^{-1} =\{j-1,j+1\} \mbox{ for all } j\in \sh(\alpha) \}.$$
For every $n\geq 4$, notice that $L^{*}_{0} =L^{*}_{n-2} =L^{*}_{n-1} =L^{*}_{n} =\emptyset$, and that $L^{*}_{n-3} =\emptyset$ if $n$ is odd.

\smallskip 

As in Lemma \ref{l12}, we also have the following lemma.

\smallskip 

\begin{lemma} \label{l19}
	\begin{itemize}
		\item [$(i)$] Let $1\leq r\leq n$. Then each element of $E(L_{r})$ is undecomposable in $\mathcal{PCF}(n,r)$. 
		\item [$(ii)$] Let $1\leq r\leq n-4$ (if $n>4$). Then each element of $L^{*}_{r}$ is undecomposable in $\mathcal{PCF}(n,r)$. Moreover, if $n$ is even, then each element of $L^{*}_{n-3}$ is undecomposable in $\mathcal{PCF}(n,n-3)$.  
	\end{itemize}
\end{lemma}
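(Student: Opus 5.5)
Part $(i)$ goes exactly as Lemma \ref{l5}. Let $\alpha\in E(L_{r})$ and suppose $\alpha=\beta\gamma$ with $\beta,\gamma\in\mathcal{PCF}(n,r)$. Then $\ker(\beta)\cap(\dom(\alpha)\times\dom(\alpha))\subseteq\ker(\alpha)$, $\dom(\alpha)\subseteq\dom(\beta)$, and $\im(\alpha)\subseteq\im(\beta\gamma)$. Since $\alpha$ is idempotent, $\im(\alpha)=\fix(\alpha)$. Every fixed point $x$ of $\alpha=\beta\gamma$ satisfies $x\beta\preceq x$ and $x=(x\beta)\gamma\preceq x\beta$, so $x\beta=x$. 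Hence $\im(\alpha)=\fix(\alpha)\subseteq\fix(\beta)\subseteq\im(\beta)$. As $|\im(\beta)|\le r=|\im(\alpha)|$, this gives $\im(\beta)=\im(\alpha)$, so $\im(\beta)\subseteq\fix(\gamma)$ and therefore $\alpha=\beta\gamma=\beta|_{\dom(\alpha)}$.

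It remains to see $\dom(\beta)=\dom(\alpha)$. If $i\in\dom(\beta)$, then $i\beta\in\im(\alpha)\subseteq\fix(\gamma)\subseteq\dom(\gamma)$, so $i\in\dom(\beta\gamma)=\dom(\alpha)$. Thus $\beta=\alpha$. The symmetric case, where we only know the factor $\gamma$ is not $\alpha$, is handled the same way because the argument forces $\beta=\alpha$ regardless. This is literally the proof of Lemma \ref{l12}$(i)$ restricted to the submonoid, since the set of available factors only shrinks.

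For $(ii)$ the plan is to copy the proof of Lemma \ref{l12}$(ii)$. Take $\alpha\in L^{*}_{r}$ and $j\in\sh(\alpha)$, so $(j-1)\alpha=j=(j+1)\alpha$ and $j\notin\dom(\alpha)$, and write $\alpha=\beta\gamma$ with $\beta,\gamma\in\mathcal{PCF}(n,r)$. As before, $\fix(\alpha)\subseteq\fix(\beta)\cap\fix(\gamma)$ and $(j-1)\beta\in\{j-1,j\}$. The critical step is to exclude $(j-1)\beta=j-1$. In Lemma \ref{l12} this used an image-size count forcing $(j+1)\beta=j+2$ with $j+2\in\im(\alpha)$. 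That count still works, but the argument now needs room: the new point $j-1$ in $\im(\beta)$ must be compensated by some other point of $\im(\alpha)\cup\sh$ that is lost. In $\mathcal{PCF}_n$, the fence-preserving condition forbids configurations such as $j\beta=j\pm1$ next to a domain point mapped elsewhere. So I would enumerate the possible values of $(j\pm1)\beta$ and $(j\pm2)\beta$ and use fence-preservation of $\beta$ together with $\beta\gamma=\alpha$ to show every alternative either enlarges $\im(\beta)$ beyond $r$ or contradicts $(j\pm1)\alpha=j$. This gives $(j-1)\beta=j=(j+1)\beta$, and the rest, namely $\im(\beta)=\im(\alpha)\subseteq\fix(\gamma)$, $\dom(\beta)=\dom(\alpha)$ and $\beta=\alpha$, follows verbatim.

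The main obstacle is explaining the range restriction $r\le n-4$, with the exceptional case $r=n-3$ for $n$ even. I expect that for these $r$ the defect is at least $4$ (respectively $3$ with $n$ even), which leaves enough missing points that a competing factor $\beta$ with the extra value $j-1$ cannot fit inside $\mathcal{PCF}(n,r)$. For larger $r$, by contrast, $L^{*}_{r}$ is empty or an $\alpha$ could factor through a partial identity. So I would first check the small cases $r=n-3$ with $n$ even, where $\alpha$ is essentially forced to have the shape $2\mapsto3,4\mapsto3$ on a short window with a fixed tail. I would verify there, directly from the fence constraints, that any $\beta$ with $(j-1)\beta=j-1$ would require $\gamma$ to send $j-1$ to $j$, which is impossible because $\gamma$ is decreasing and $j-1\prec j$ means $j\not\preceq j-1$. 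That last observation is cleaner than the count, and I would try it first: $(j-1)\beta=j-1$ forces $j=(j-1)\gamma\preceq j-1$, a contradiction, since $j$ is odd and $j-1$ is even with $j\prec j-1$ failing. If it works, the restriction on $r$ is only needed to guarantee that $L^{*}_{r}$ is nonempty or well-behaved.
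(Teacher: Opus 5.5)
Your part $(i)$ is fine. The fixed-point argument ($x\beta\preceq x$ and $x=(x\beta)\gamma\preceq x\beta$ force $x\beta=x$, after which the image count gives $\im(\beta)=\im(\alpha)\subseteq\fix(\gamma)$ and $\beta=\alpha$) is a correct, self-contained version of what the paper cites from Lemma~\ref{l5}.

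Part $(ii)$, however, has a genuine gap. The one concrete step you offer for excluding $(j-1)\beta=j-1$ is wrong. Here $j\in\sh(\alpha)$ is odd, hence minimal, and $j-1$ is even, hence maximal, and in the up-fence $j\prec j-1$ \emph{does} hold. So $(j-1)\gamma=j$ is perfectly compatible with $\gamma$ being fence-decreasing (the map $\lambda_{2,j-1}$ does exactly this), and no contradiction arises. Your fallback, enumerating the values of $(j\pm1)\beta$ and $(j\pm2)\beta$ using fence-preservation, is never carried out. The idea that the range $r\le n-4$ (or $r=n-3$ for even $n$) supplies necessary ``room'' is a red herring.

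The missing idea is the one you already invoked in $(i)$: undecomposability passes to subsemigroups. Since $L^{*}_{r}=L_{r}\cap K^{*}_{r}\subseteq K^{*}_{r}$ and $\mathcal{PCF}(n,r)\subseteq\mathcal{PDF}(n,r)$, a factorization $\alpha=\beta\gamma$ with $\beta,\gamma\in\mathcal{PCF}(n,r)\setminus\{\alpha\}$ would also be one in $\mathcal{PDF}(n,r)$. That contradicts Lemma~\ref{l12}$(ii)$, which holds for all $1\le r\le n-2$ and hence covers both $r\le n-4$ and $r=n-3$. This is the paper's whole proof. Fence-preservation plays no role, and the restriction on $r$ only marks where $L^{*}_{r}$ can be nonempty.

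If you want a direct argument instead, exclude $(j-1)\beta=j-1$ by counting, not by the order. The image $\im(\beta)$ contains $\fix(\alpha)$ and, for each $k\in\sh(\alpha)$, at least one point of $k\gamma^{-1}$. These sets are pairwise disjoint, so $\lvert\im(\beta)\rvert\le r$ forces exactly one such point per $k$. If $(j-1)\beta=j-1$, then $(j+1)\beta\in\{j,j+1,j+2\}$. The value $j+2$ fails, since odd points in $\dom(\gamma)$ are fixed. The values $j$ or $j+1$ would give a second point of $\im(\beta)$ sent to $j$ by $\gamma$.
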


\proof The proof of $(i)$ is almost the same with the proof of Lemma \ref{l5}. Moreover, since $L^{*}_{r} \subseteq K^{*}_{r}$, and since $\mathcal{PCF}(n,r)$ is a subsemigroup of $\mathcal{PDF}(n,r)$, it follows from Lemma \ref{l12} that each element of $L^{*}_{r}$ is undecomposable in $\mathcal{PDF}(n,r)$, and so in $\mathcal{PCF}(n,r)$. \qed

Let $\alpha \in \mathcal{PCF}_{n}$, and let
\begin{eqnarray*}
	&&B_{\alpha} =\{\, j\in \dom(\alpha)\cap [n]^{e} : j\alpha =j-1 \mbox{ and } j-1 \notin \dom(\alpha) \mbox{ or}\\
	&&\hspace{49mm} j\alpha =j+1 \mbox{ and } j+1 \notin \dom(\alpha) \mbox{ if } j\neq n\}.
\end{eqnarray*}
Then $\alpha \in \mathcal{PCF}_{n}$ is idempotent if and only if $B_{\alpha} =\emptyset$. Moreover, if $\alpha \in \mathcal{PCF}_{n}$ is not idempotent, then it is clear that $j-1,j+1 \notin \dom(\alpha)$ for every $j\in B_{\alpha}$. Instead of either $i\notin \dom(\alpha)$ or $i\in \dom(\alpha)$ and $i\alpha \in \{i-1,i\}$ ($i\alpha \in \{i,i+1\}$), we simply write $i\alpha \neq i+1$ ($i\alpha \neq i-1$) in the following proof. Moreover, for each $\alpha \in \mathcal{PCF}_{n} \setminus E(\mathcal{PCF}_{n})$, we let
\begin{eqnarray*}
	B^{1}_{\alpha} =\{\, j,j+2\in B_{\alpha} : j\alpha =j+1= (j+2)\alpha \} &\mbox{and}& B^{2}_{\alpha} =B_{\alpha} \setminus B^{1}_{\alpha}.
\end{eqnarray*}
Let $\alpha \in L_{r}$. Then it is clear that $\alpha \in L_{r}^{*}$ if and only if $B^{2}_{\alpha} =\emptyset$.

\smallskip

\begin{proposition} \label{p20}
	Let $1\leq r\leq n-2$. Then $L_{r} \subseteq \langle E(L_{r}) \cup L_{r}^{*} \rangle$.
\end{proposition}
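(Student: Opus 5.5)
We mimic Proposition \ref{p13}, arguing by induction on $\lvert B^{2}_{\alpha} \rvert$. Let $\alpha\in L_{r}$ with $1\leq r\leq n-2$. If $B_{\alpha}=\emptyset$, then $\alpha\in E(L_{r})$. If $B_{\alpha}\neq\emptyset$ and $B^{2}_{\alpha}=\emptyset$, then $\alpha\in L^{*}_{r}$. So assume $B^{2}_{\alpha}\neq\emptyset$ and fix $j\in B^{2}_{\alpha}$.

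\textbf{Shape of $\alpha$ near $j$.} Let $k=j\alpha\in\{j-1,j+1\}$. Then $k\notin\dom(\alpha)$, and we know $j-1,j+1\notin\dom(\alpha)$. The only other possible preimage of $k$ is $j-2$ (if $k=j-1$) or $j+2$ (if $k=j+1$), and that element would then lie in $B_{\alpha}$. So $j\in B^{2}_{\alpha}$ means one of two things:
\begin{itemize}
\item[(a)] $k\alpha^{-1}=\{j\}$; or
\item[(b)] $k=j-1$ and $(j-2)\alpha=j-1$, i.e.\ the pair $j-2,j$ is collapsed onto $j-1$. This is not excluded by $B^{1}_{\alpha}$, since $B^{1}_{\alpha}$ only records pairs collapsed upward onto $j+1$.
\end{itemize}
I first need to check the definitions carefully: by the definition of $K^{*}_{r}$, case (b) with both preimages present should actually belong to $L^{*}$. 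So presumably $B^{1}_{\alpha}$ is meant to capture every pair $\{j-1,j+1\}=i\alpha^{-1}$ with $i$ odd. If so, $j\in B^{2}_{\alpha}$ forces case (a), $\lvert k\alpha^{-1}\rvert=1$, exactly as for $A^{*}_{\alpha}$ in Proposition \ref{p13}. If instead the literal definition is intended, case (b) must be handled by a separate factorisation through a map that sends $j-2\mapsto j-2$ or similar. I expect this to lead back to a $B^{1}$-type element.

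\textbf{Factorisation in case (a).} Define $\beta:\dom(\alpha)\to[n]$ by $j\beta=j$ and $i\beta=i\alpha$ otherwise, and $\gamma=1_{\im(\alpha)}\cup\{(j,k)\}$. Then $\alpha=\beta\gamma$.
\begin{itemize}
\item \emph{Image sizes.} Since $k$ has the single preimage $j$, $\im(\beta)=(\im(\alpha)\setminus\{k\})\cup\{j\}$ has size $r$, and $\gamma$ has image $\im(\alpha)$, so also size $r$.
\item \emph{$\gamma$ is an idempotent.} Since $k\in\im(\alpha)\subseteq\dom(\gamma)$ and $k$ is fixed, $\gamma\in E$.
\item \emph{$B$ decreases.} Since $j$ is a fixed point of $\beta$, $B^{2}_{\beta}=B^{2}_{\alpha}\setminus\{j\}$, which gives the induction step.
\end{itemize}

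\textbf{Main obstacle: fence-preservation.} The decreasing property is clear for both maps; the real work is checking that $\beta,\gamma\in\mathcal{PCF}_{n}$.

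For $\beta$, the comparabilities involving $j$ are $j-1\prec j\succ j+1$, and neither $j-1$ nor $j+1$ lies in $\dom(\alpha)=\dom(\beta)$. So no constraint involving $j$ arises, and all other pairs behave as in $\alpha$.

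For $\gamma$, the neighbours of $j$ are $j\pm1$. The neighbour $k$ is in the domain and satisfies $k\gamma=k=j\gamma$, which is fine. The other neighbour $j\mp1$ may lie in $\im(\alpha)$ and be fixed. If it is, we would need $j\mp1\preceq k$, which fails because $k$ and $j\mp1$ are distinct odd elements and hence incomparable. This is the delicate point. In that situation I would restrict $\gamma$'s extra point differently, or first factor $\alpha=\alpha'\,1_{Y}$ with a partial identity, noting that $1_{\im(\alpha)}$ is an idempotent of rank $r$. Alternatively I would choose $\beta$ with $j\beta=k$ replaced by a rank-$r$ idempotent route that moves the obstructing element out of the image.

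Once this obstruction is handled, the induction on $\lvert B^{2}_{\alpha}\rvert$ closes the argument.
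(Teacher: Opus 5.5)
You correctly reduce to $\lvert k\alpha^{-1}\rvert=1$, and your checks on $\beta$ are fine. But there is a genuine gap: the induction step fails exactly where you suspected, and the proposal leaves it open. For an arbitrary $j\in B^{2}_{\alpha}$, the map $\gamma=1_{\im(\alpha)}\cup\{(j,k)\}$ need not lie in $\mathcal{PCF}_{n}$, and none of your three suggested remedies is an actual construction.

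For example, $\alpha=\left(\begin{matrix}2&4\\1&3\end{matrix}\right)\in L_{5,2}$ has $B^{1}_{\alpha}=\emptyset$ and $B^{2}_{\alpha}=\{2,4\}$. With $j=2$, your $\gamma$ sends $3\mapsto 3$ and $2\mapsto 1$, although $3\prec 2$ and $3\not\preceq 1$.

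Your worry in (b) is unfounded. In the definition of $B^{1}_{\alpha}$ the letter $j$ is a dummy variable, so a pair $j-2,j$ collapsed onto $j-1$ is recorded there, with $j-2$ playing the role of $j$. Hence $j\in B^{2}_{\alpha}$ always gives $\lvert k\alpha^{-1}\rvert=1$.

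The missing idea is an extremal choice of $j$. The obstruction is always caused by another element of $B^{2}_{\alpha}$ moving in the same direction. Suppose $j\alpha=j-1$ and $j+1\in\im(\alpha)$. Since $j+1\notin\dom(\alpha)$, necessarily $(j+2)\alpha=j+1$, so $j+2\in B_{\alpha}$. Moreover $j+2\notin B^{1}_{\alpha}$, because its only possible partner is $j$ and $j\alpha\neq j+1$. Symmetrically, if $j\alpha=j+1$ and $j-1\in\im(\alpha)$, then $j-2\in B^{2}_{\alpha}$ with $(j-2)\alpha=j-1$.

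So take $j$ to be the largest element of $B^{2}_{\alpha}$ with $j\alpha=j-1$, or, if there is none, the smallest with $j\alpha=j+1$. Then $\gamma\in E(L_{r})$. Since $B^{1}_{\beta}=B^{1}_{\alpha}$ and $B^{2}_{\beta}=B^{2}_{\alpha}\setminus\{j\}$, your induction closes. In the example, peel off $j=4$ first.

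This is exactly the device the paper uses in its last step, where it takes $j=\min\{i\in B_{\gamma} : i\gamma=i+1\}$. Otherwise the paper argues differently. It first writes $\alpha=\beta\gamma$ with $\beta\in L^{*}_{r}$ fixing all of $B^{2}_{\alpha}$ at once, so that $\gamma$ is injective with $B_{\gamma}=B^{2}_{\alpha}$. It then removes the downward moves from the left by cutting the domain at $j$. With the extremal choice, your one-point-at-a-time route avoids that preliminary splitting.
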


\proof Let $\alpha \in L_{r}\setminus (E(L_{r}) \cup L_{r}^{*})$. Then it is clear that $B^{2}_{\alpha} \neq \emptyset$. Moreover, for each $j\in B^{2}_{\alpha}$, either $j\alpha =j-1$ and $(j-2)\alpha \neq j-1$ (if $j\geq 4$) or $j\alpha =j+1$ and $(j+2)\alpha \neq j+1$ (if $j\leq n-2$). That is, $\lvert (j\alpha) \alpha^{-1} \rvert =1$ for every $j\in B^{2}_{\alpha}$. If $B^{1}_{\alpha} \neq \emptyset$, then we consider the mappings $\beta : \dom(\alpha) \rightarrow [n]$ and $\gamma : \im(\beta) \rightarrow [n]$ by
\begin{eqnarray*}
	i\beta =\left\{\begin{array}{cl}
		i\alpha &\mbox{if } i\in \dom(\alpha)\setminus B^{2}_{\alpha},\\
		i       &\mbox{if } i\in B^{2}_{\alpha}
	\end{array}	\right. &\mbox{and}&
	i\gamma =\left\{\begin{array}{cl}
		i		&\mbox{if } i\in \im(\beta) \setminus B^{2}_{\alpha},\\
		i\alpha &\mbox{if } i\in B^{2}_{\alpha}.
	\end{array}	\right.
\end{eqnarray*} 
Then clearly, $\beta\in L_{r}^{*}$,\, $\gamma \in L_{r}$ and $\alpha =\beta \gamma$. If $B^{1}_{\alpha} =\emptyset$, then we let $\gamma =\alpha$. In both cases, we notice that $\gamma$ is injective, and that $B_{\gamma} =B^{2}_{\gamma} =B^{2}_{\alpha}$.  

\smallskip

Since $\alpha$ is not idempotent, notice that $B_{\gamma}= B^{2}_{\gamma} \neq \emptyset$. Let $j=\min \{ i\in B_{\gamma} : i\gamma =i-1 \}$. Then $j-1,j+1\notin \dom(\gamma)$ and $(j+2)\gamma \in \{j+1 ,j+2, j+3\}$ if $j+2\in \dom(\gamma)$. Next we define the mappings $\mu : \dom(\gamma) \rightarrow [n]$ and $\delta : \im(\mu) \cup \{j-1\} \rightarrow [n]$ by
\begin{eqnarray*}
		i\mu =\left\{\begin{array}{cl}
		i\gamma &\mbox{if } i\in \dom(\gamma) \cap [1,j-1],\\
		i       &\mbox{if } i\in \dom(\gamma) \cap [j,n]
	\end{array}	\right.&\mbox{and}&
	i\delta =\left\{\begin{array}{cl}
		i		&\mbox{if } i\in \dom(\delta) \cap [1,j-1],\\
		i\gamma	&\mbox{if } i\in \dom(\delta) \cap [j,n].
	\end{array}	\right.
\end{eqnarray*} 
Then it is clear that $\mu\in E(L_{r})$,\, $\delta\in L_{r}$,\, $\lvert B_{\delta} \rvert= \lvert B_{\gamma} \rvert-1$, and that $\gamma =\mu \delta$. Let $j=\min \{ i\in B_{\gamma} : i\gamma =i+1 \}$. Then $(j+2)\gamma \neq j+1$ if $j+2\in \dom(\gamma)$ since $j\notin B^{1}_{\gamma}$. Now we define the mappings $\theta: \dom(\gamma) \rightarrow [n]$ and $\lambda : \im(\gamma) \cup \{j\} \rightarrow [n]$ by
\begin{eqnarray*}
	i\theta =\left\{\begin{array}{cl}
		i\gamma &\mbox{if } i\in \dom(\gamma) \setminus \{j\}\\
		j       &\mbox{if } i=j.
	\end{array}	\right.&\mbox{and}&
	i\lambda =\left\{\begin{array}{cl}
		i		&\mbox{if } i\in \im(\gamma),\\
		j+1 &\mbox{if } i=j.
	\end{array}	\right.
\end{eqnarray*} 
Then it is clear that $\theta \in L_{r}$,\, $\lambda \in E(L_{r})$,\, $\lvert B_{\theta} \rvert= \lvert B_{\gamma} \rvert-1$, and that $\gamma =\theta \lambda$.

\smallskip

Therefore, by induction first on $\lvert \{i\in B_{\gamma} : i\gamma =i-1 \}\rvert$, and then on $\lvert \{i\in B_{\gamma} : i\gamma =i+1 \}\rvert$, the proof is now completed. \qed

\begin{example} If $\alpha= \left(\begin{matrix}
	2&4&6&8&10&12\\
	3&3&5&7&11&13
\end{matrix}\right) \in L_{13,5}$, then $B^{1}_{\alpha} =\{2,4\}$ and $B^{2}_{\alpha} =\{6,8,10,12\}$. By applying the first factorization, given in the proof above, we have $\alpha=\beta \gamma$ where 
$\beta= \left(\begin{matrix}
	2&4&6&8&10&12\\
	3&3&6&8&10&12
\end{matrix}\right) \in L^{*}_{5}$ 
and 
$\gamma =\left(\begin{matrix}
	3&6&8&10&12\\
	3&5&7&11&13
\end{matrix}\right) \in L_{5}$. 
Moreover, by applying the other factorizations, we have
$$\begin{array}{rcl}\gamma &=& \left(\begin{matrix}
	3&6&8&10&12\\
	3&6&8&10&12
\end{matrix}\right) \left(\begin{matrix}
	3&5&6&8&10&12\\
	3&5&5&8&10&12
\end{matrix}\right) \left(\begin{matrix}
3&5&7&8&10&12\\
3&5&7&7&10&12
\end{matrix}\right) \vspace*{1mm}\\
&&\left(\begin{matrix}
	3&5&7&10&12&13\\
	3&5&7&10&13&13
\end{matrix}\right)\left(\begin{matrix}
3&5&7&10&11&13\\
3&5&7&11&11&13
\end{matrix}\right), \end{array}$$
and each factor is an idempotent in $L_{5}$.
\end{example} 

Let $1\leq r\leq n-1$, let $\alpha \in \mathcal{PCF}(n,r)$, and let 
$$C_{\alpha} =\{ j\in \dom(\alpha) \cap [n]^{e} : \lvert (j\alpha)\alpha^{-1} \rvert \geq 2\}.$$
Notice that either $j+1\notin \dom(\alpha)$ or $j-1\notin \dom(\alpha)$ for each $j\in C_{\alpha}$ if $C_{\alpha} \neq \emptyset$. 

\smallskip

To show that $L_{r}\subseteq \langle L_{r+1}\rangle$ for $1\leq r\leq n-3$, by Proposition \ref{p20}, it is sufficient to prove the following proposition. 

\smallskip

\begin{proposition} \label{p21}
	Let $1\leq r\leq n-3$. Then $E(L_{r}) \cup L_{r}^{*} \subseteq \langle E(L_{r+1}) \cup L_{r+1}^{*}\rangle$.
\end{proposition}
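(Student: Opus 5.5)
To prove the statement I would treat the two kinds of generators separately: first idempotents $\alpha\in E(L_{r})$, then the elements of $L^{*}_{r}$. In each case the aim is a factorization $\alpha=\beta\gamma$ in which each factor lies in $L_{r+1}$. By Proposition \ref{p20}, applied with $r+1\leq n-2$, every element of $L_{r+1}$ lies in $\langle E(L_{r+1})\cup L^{*}_{r+1}\rangle$. So it is enough to produce factors in $L_{r+1}$, and I do not need them to be generators themselves. The guiding idea, as in Propositions \ref{p3} and \ref{p10}, is to enlarge the image by one point in each factor while keeping both factors fence-preserving.

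\textbf{Case $\alpha\in E(L_{r})$, with $\defect(\alpha)\cap[n]^{o}$ containing an odd point $j$ whose neighbours $j-1,j+1$ are not in $\dom(\alpha)$.} Here I use the first construction in the proof of Proposition \ref{p10}. Set $\beta=\alpha\cup\{(j,j)\}$. Let $\gamma=1_{\im(\alpha)\cup\{k\}}$ for a suitable $k\in\defect(\alpha)\setminus\{j\}$. Then $\gamma$ is a partial identity, so it is fence-preserving and lies in $E(L_{r+1})$. The only thing to check is that $\beta$ is fence-preserving. This holds because the new point $j$ is comparable only to $j\pm1$, and these are outside $\dom(\alpha)$.

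\textbf{Remaining idempotents.} Since $|\im(\alpha)|=r\leq n-3$, there are at least three defect points, and I can choose them so that some neighbourhood is free. A convenient choice is a defect point that is even, or odd with exactly one neighbour in the domain. I then adapt the second construction of Proposition \ref{p10} as follows.
\begin{itemize}
\item Extend $\alpha$ at such a point $j$ by $j\mapsto j$ to obtain $\beta$.
\item Take $\gamma$ to be the identity on $\im(\alpha)\cup\{k\}$, additionally sending $j$ to $j\alpha$ when $j\in\dom(\alpha)$.
\end{itemize}
Since $\alpha$ is idempotent, an even $j\in\dom(\alpha)$ with $j\alpha=j\pm1$ has $j\alpha\in\dom(\alpha)$. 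Hence $\gamma$ restricted near $j$ looks like $\lambda_{1j}$ or $\lambda_{2j}$ on a subset, which is fence-preserving provided the opposite neighbour of $j$ is absent from $\dom(\gamma)$. That in turn is arranged by choosing $k$ appropriately.

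\textbf{Case $\alpha\in L^{*}_{r}$.} Pick $j\in\sh(\alpha)$, so that $(j-1)\alpha=j=(j+1)\alpha$ with $j\notin\dom(\alpha)$. I would split the fibre: let $\beta$ agree with $\alpha$ except $(j+1)\beta=j+1$, and let $\gamma$ be the identity on $\im(\alpha)$ together with $j+1\mapsto j$. Then $\im(\beta)=\im(\alpha)\cup\{j+1\}$, so $\beta$ has rank $r+1$. Also $\beta\gamma=\alpha$. The map $\gamma$ is fence-preserving because $j+2\notin\dom(\gamma)$ whenever $j+2\notin\im(\alpha)$. If instead $j+2\in\im(\alpha)$, so that $j+2$ is fixed, then $j+1\prec j+2$ while $j\not\preceq j+2$, and $\gamma$ fails to be fence-preserving. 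In that subcase I would split at $j-1$ instead, and if both $j-2$ and $j+2$ lie in $\im(\alpha)$, I would enlarge $\gamma$'s image by an extra defect point instead. The rank of $\gamma$ is then at most $r+1$; if it is smaller, I would pad $\gamma$ with one more fixed defect point far away, which exists since there are at least three defect points.

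\textbf{Expected obstacle.} The main obstacle is this local compatibility, namely guaranteeing that a suitable defect point exists whose neighbours do not destroy fence-preservation. I expect this to require a case analysis on the positions of the defect points relative to $\dom(\alpha)$, and it is exactly where the bound $r\leq n-3$, rather than $r\leq n-2$, is used.
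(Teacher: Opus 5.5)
Your factorizations are essentially the right ones, and they give a more direct route than the paper's. However, the two points you leave open, namely the choice of the extension point and the ``bad'' subcase for $L^{*}_{r}$, are not handled correctly as written.

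\emph{Idempotent case.} The second of your suggested choices, an odd defect point with exactly one neighbour in $\dom(\alpha)$, can fail. Take $n=4$, $r=1$ and $\alpha=\left(\begin{matrix}2&3&4\\3&3&3\end{matrix}\right)\in E(L_{1})$. Your first case does not apply, and $1$ is such a point. But $\alpha\cup\{(1,1)\}$ is not fence-preserving, since $1\prec 2$ and $1\not\preceq 3=2\alpha$.

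The case analysis you anticipate is in fact unnecessary. If $\alpha=1_{Y}$, then $\alpha=1_{Y\cup\{a\}}1_{Y\cup\{b\}}$ for distinct $a,b\notin Y$. Otherwise let $j$ be a non-fixed point of $\alpha$. Then $j$ is even, $j\in\defect(\alpha)$, and $j\alpha=j+\varepsilon$ with $\varepsilon=\pm1$ is fixed. Moreover $j-\varepsilon\notin\dom(\alpha)$, since otherwise $j-\varepsilon\prec j$ would require $j-\varepsilon=(j-\varepsilon)\alpha\preceq j+\varepsilon$; hence $j-\varepsilon\notin\im(\alpha)$. Choose $k\in\defect(\alpha)\setminus\{j,j-\varepsilon\}$, which exists as $n-r\geq 3$. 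Then your $\beta$ ($\alpha$ with $j$ fixed) and your $\gamma$ (identity on $\im(\alpha)\cup\{k\}$, $j\mapsto j+\varepsilon$) both lie in $E(L_{r+1})$, and $\beta\gamma=\alpha$.

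\emph{The case $L^{*}_{r}$.} The subcase $j+2\in\im(\alpha)$ never occurs, and your remedy for it would not have worked anyway. If $j+2=i\alpha$, then $i\beta=j+2$ and $\beta\gamma=\alpha$ force $(j+2)\gamma=j+2$, and adding further points to $\gamma$ cannot remove the violation. Note also the direction: $j+1$ is even, so $j+2\prec j+1$, and the violation is $j+2\not\preceq j=(j+1)\gamma$.

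To see that the subcase is vacuous: $(j+1)\alpha=j$ and $j+2\prec j+1$ give $j+2\notin\dom(\alpha)$. So $j+2\in\im(\alpha)$ would put $j+2\in\sh(\alpha)$, and the defining condition of $L^{*}_{r}$ would force $(j+1)\alpha=j+2$, a contradiction. Hence your first split always works. Since $\im(\gamma)=\im(\alpha)$, padding is always needed. Because $j+1,j+2\in\defect(\alpha)$, some $k\in\defect(\alpha)\setminus\{j+1,j+2\}$ exists. Then $\gamma\cup\{(k,k)\}\in E(L_{r+1})$ and $\beta\in L_{r+1}$ with product $\alpha$, and Proposition \ref{p20} finishes.

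\emph{Comparison.} With these repairs your argument is complete and genuinely shorter than the paper's. For non-identity idempotents, the paper splits off $j=\min C_{\alpha}$ as $\alpha=\gamma\delta$ with $\delta\in E(L_{r})$ and $\lvert C_{\delta}\rvert=1$. It then treats such $\delta$ separately as $1_{\dom(\delta)}\beta$. Your $\beta$ is exactly the paper's $\gamma$, and your $\gamma$ is the paper's $\delta$ padded by $k$. For $L^{*}_{r}$, the paper fixes both preimages of the shift point and obtains a rank-$r$ idempotent second factor, which it feeds back into the idempotent case. Your padding lifts the second factor to rank $r+1$ at once, so each element is a product of two elements of $L_{r+1}$ in one step. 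For idempotents it is even a product of two idempotents of rank $r+1$, without Proposition \ref{p20}. The price is choosing $k$ away from one or two critical points, which is exactly where $n-r\geq 3$ enters.
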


\proof Let  $\alpha \in E(L_{r})$. Since, by Corollary \ref{c18}, every element of $E(L_{r})$ is strictly partial, i.e. not full, and since each partial identity in $E(L_{r})$ can be written as product of two partial identities in $E(L_{r+1})$, it is enough to consider idempotents of $E(L_{r})$, which are not partial identities. Let $\alpha \in E(L_{r})$ be a such an element. Clearly, $C_{\alpha} \neq \emptyset$. First suppose that $C_{\alpha} =\{ j\}$. Then $\lvert  \dom(\alpha) \rvert =r+1$, and either $(j-1)\alpha =j-1= j\alpha$ and $j+1\notin \dom(\alpha)$, or $(j+1)\alpha =j+1= j\alpha$ and $j-1\notin \dom(\alpha)$. In the both cases, $[n]\setminus (\dom(\alpha) \cup \{j-1, j+1\}) \neq \emptyset$. If we let  $k\in [n]\setminus (\dom(\alpha) \cup \{j-1, j+1\})$, and define the mapping $\beta :\dom(\alpha) \cup \{k\} \rightarrow [n]$ by
\begin{eqnarray*}
	i\beta =\left\{\begin{array}{cl}
		i\alpha		&\mbox{if } i\in \dom(\alpha),\\
		k		&\mbox{if } i=k,
	\end{array}	\right.
\end{eqnarray*} 
then it is clear that $1_{\dom(\alpha)},\, \beta \in E(L_{r+1})$ with $\alpha= 1_{\dom(\alpha)} \beta$. 

\smallskip

Suppose that $\lvert C_{\alpha} \rvert \geq 2$, and let $j= \min(C_{\alpha})$. First notice that $r+2\leq \lvert  \dom(\alpha) \rvert \leq n-2$. If we define the mappings $\gamma : \dom(\alpha) \rightarrow [n]$ and $\delta : \im(\alpha) \cup \{j\} \rightarrow [n]$ by
\begin{eqnarray*}
	i\gamma =\left\{\begin{array}{cl}
		i\alpha &\mbox{if } i\in \dom(\alpha)\setminus \{j\},\\
		j       &\mbox{if } i=j
	\end{array}	\right. &\mbox{and}&
	i\delta =\left\{\begin{array}{cl}
		i		&\mbox{if } i\in \im(\alpha),\\
		j\alpha &\mbox{if } i=j,
	\end{array}	\right.
\end{eqnarray*} 
then it is clear that $\gamma\in E(L_{r+1})$,\, $\delta \in E(L_{r})$, and $\alpha =\gamma \delta$. Thus, since $\lvert C_{\delta} \rvert=1$, from the previous case, we conclude that $E(L_{r}) \subseteq \langle E(L_{r+1})\rangle$. 

\smallskip

Let $\alpha \in L_{r}^{*}$. Thus, there exists $2\leq j\leq n-2$ such that $j\alpha =j+1= (j+2)\alpha$ and $j+1\notin \dom(\alpha)$, i.e. $j,\, j+2 \in B^{1}_{\alpha}$. If we define the mappings $\lambda : \dom(\alpha) \rightarrow [n]$ and $\mu : \im(\alpha) \cup \{j,j+2\} \rightarrow [n]$ by
\begin{eqnarray*}
	i\lambda =\left\{\begin{array}{cl}
		i\alpha &\mbox{if } i\in \dom(\alpha)\setminus \{j,j+2\},\\
		i       &\mbox{if } i\in \{j,j+2\}
	\end{array}	\right. &\mbox{and}&
	i\mu =\left\{\begin{array}{cl}
		i	&\mbox{if } i\in \im(\alpha),\\
		j+1 &\mbox{if } i\in \{j,j+2\},
	\end{array}	\right.
\end{eqnarray*} 
then it is clear that $\lambda\in L_{r+1}$,\, $\mu \in E(L_{r})$ and $\alpha =\lambda \mu$. By Proposition \ref{p20}, the proof is now completed. \qed

Thus, by Lemma \ref{l19}, and Propositions \ref{p20} and \ref{p21}, we have the following immediate theorem.

\smallskip

\begin{theorem}\label{t22}
	Let $1\leq r\leq n-2$. Then $E(L_{r}) \cup L_{r}^{*}$ is the unique minimal generating set of $\mathcal{PCF}(n,r)$. \hfill $\square$
\end{theorem}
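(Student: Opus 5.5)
The plan is to prove two things: that $E(L_{r}) \cup L_{r}^{*}$ generates $\mathcal{PCF}(n,r)$, and that every element of this set must lie in any generating set. Together these give that it is the unique minimal generating set.

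\emph{Generation.} First, Proposition \ref{p21}, iterated downward, gives for every $s\le r$ (with $s\ge 1$) that $E(L_{s}) \cup L_{s}^{*} \subseteq \langle E(L_{r}) \cup L_{r}^{*}\rangle$. Combining this with Proposition \ref{p20} gives $L_{s}\subseteq \langle E(L_{s}) \cup L_{s}^{*}\rangle \subseteq \langle E(L_{r}) \cup L_{r}^{*}\rangle$ for all $1\le s\le r$. The remaining element is $0_{n}\in L_{0}$. It is a product of two partial identities $1_{\{a\}}1_{\{b\}}$ with $a\neq b$, and these lie in $E(L_{1})$. So $\mathcal{PCF}(n,r)=\bigcup_{s\le r}L_{s}$ is generated by $E(L_{r}) \cup L_{r}^{*}$.

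\emph{Undecomposability.} Lemma \ref{l19}(i) shows directly that each element of $E(L_{r})$ is undecomposable in $\mathcal{PCF}(n,r)$. For $L_{r}^{*}$, Lemma \ref{l19}(ii) covers $r\le n-4$, and also $r=n-3$ when $n$ is even. For $r=n-2$, and for $r=n-3$ with $n$ odd, we have $L_{r}^{*}=\emptyset$ by the remark preceding Lemma \ref{l19}, so nothing needs to be proved there. (One could also argue the general case directly: $L_{r}^{*}\subseteq K_{r}^{*}$, these elements are undecomposable in $\mathcal{PDF}(n,r)$ by Lemma \ref{l12}(ii), and $\mathcal{PCF}(n,r)\subseteq\mathcal{PDF}(n,r)$.)

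\emph{Minimality and uniqueness.} Every generating set must contain all undecomposable elements. Hence $E(L_{r}) \cup L_{r}^{*}$ is contained in every generating set of $\mathcal{PCF}(n,r)$. Since it is itself a generating set, it is the unique minimal one.

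The only potentially delicate step is the bookkeeping of the boundary cases $r\in\{n-3,n-2\}$ in the undecomposability part, and this is resolved by the emptiness of $L_{r}^{*}$ recorded earlier.
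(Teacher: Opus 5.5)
Your proposal is correct and follows the paper's own argument, which combines Lemma~\ref{l19} for undecomposability with Propositions~\ref{p20} and~\ref{p21} for generation. Your explicit treatment of $0_{n}=1_{\{a\}}1_{\{b\}}$ and of the cases $r=n-2$, and $r=n-3$ with $n$ odd (where $L^{*}_{r}=\emptyset$), fills in bookkeeping that the paper leaves implicit when it calls the theorem immediate.
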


Let $f_{n}= \lvert E(\mathcal{PCF}_{n}) \rvert$, and for $0\leq r\leq n$, let $f_{n,r} =\lvert E(L_{n,r}) \rvert$. Since $f_{n,0}=f_{n,n}=1$ and 
$f_{n,1}=e_{n,1}=\left\{ \begin{array}{ll}
	5m & \mbox{if $n$ is odd,}\vspace{1mm} \\
	5m-2 & \mbox{if $n$ is even,}
\end{array}\right.$
where $m= \lfloor \frac{n}{2}\rfloor$, we determine the other values of $f_{n,r}$ in the following lemma and in Table 3. First, recall that $\mathcal{PCF}_{n} =\mathcal{PDF}_{n}$ for all $1\leq n\leq 3$.

\smallskip

\begin{proposition} \label{p23} 
	Let $n\geq 4$ and let $2\leq r\leq n-2$. Then we have 
	$$f_{n,r} =\left\{ \begin{array}{ll}
		f_{n-1,r}+ f_{n-2,r-1} +f_{n-2,r-2} + f_{n-3,r-1} & \mbox{if $n$ is odd,}\vspace{1mm} \\
		f_{n-1,r}+ f_{n-1,r-1} +f_{n-3,r-1} +f_{n-3,r-2} +f_{n-4,r-1} & \mbox{if $n$ is even}
	\end{array}\right.$$
	with the assumption that $f_{k,k+1}=0$ for every $k\geq 0$. Moreover, we have 
	\begin{eqnarray*}
		f_{n,n-1} =\left\{ \begin{array}{ll}
		n & \mbox{if $n$ is odd,}\vspace{1mm} \\
		n+1 & \mbox{if $n$ is even} 
	\end{array}\right. &\mbox{and}& 
	f_{n} =\left\{ \begin{array}{ll}
		2\cdot 5^{m} & \mbox{if $n$ is odd,}\vspace{1mm} \\
		5^{m} & \mbox{if $n$ is even,}
	\end{array}\right.
	\end{eqnarray*}
	where $m=\lfloor \frac{n}{2}\rfloor$.
\end{proposition}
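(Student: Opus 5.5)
The plan is to imitate the proof of Proposition \ref{p14}. We partition $E(L_{n,r})$ according to how an idempotent $\alpha$ behaves on the last few points $n, n-1, n-2, n-3$. In each block, the restriction of $\alpha$ to an initial segment $[n-k]$ should be an arbitrary idempotent of $\mathcal{PCF}_{n-k}$ with a prescribed image size.

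First I record what idempotents in $\mathcal{PCF}_{n}$ look like, in order to check that restrictions and extensions stay inside the monoid. Odd points of $\dom(\alpha)$ are fixed. An even $j\in\dom(\alpha)$ satisfies exactly one of the following:
\begin{itemize}
\item $j\alpha=j$;
\item $j\alpha=j-1$, which forces $j-1\in\dom(\alpha)$ (idempotency) and $j+1\notin\dom(\alpha)$ (fence-preservation, since $j+1\prec j$ would require $j+1\preceq j-1$);
\item $j\alpha=j+1$, which forces $j+1\in\dom(\alpha)$ and $j-1\notin\dom(\alpha)$.
\end{itemize}
These conditions are local, so gluing a valid tail onto an idempotent of a shorter fence gives an element of $E(\mathcal{PCF}_{n})$ exactly when the conditions at the junction hold.

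For $n$ odd ($n$ minimal, $n-1$ even), the cases are as follows.
\begin{itemize}
\item $n\notin\dom(\alpha)$: these contribute $f_{n-1,r}$.
\item $n\in\dom(\alpha)$, $n-1\notin\dom(\alpha)$: these contribute $f_{n-2,r-1}$.
\item $(n-1)\alpha=n-1$: then $n-2$ is odd and is fixed or absent, and no element maps into $n-1$ from below. These contribute $f_{n-2,r-2}$.
\item $(n-1)\alpha=n$: this forces $n-2\notin\dom(\alpha)$, so these contribute $f_{n-3,r-1}$.
\end{itemize}
The option $(n-1)\alpha=n-2$ is excluded because $n\in\dom(\alpha)$.

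For $n$ even ($n$ maximal) the cases are these.
\begin{itemize}
\item $n\notin\dom(\alpha)$: these contribute $f_{n-1,r}$.
\item $n\alpha=n$: these contribute $f_{n-1,r-1}$.
\item $n\alpha=n-1$: then $n-1\in\dom(\alpha)$, and I split on $n-2$.
  \begin{itemize}
  \item $n-2\notin\dom(\alpha)$: contributes $f_{n-3,r-1}$.
  \item $(n-2)\alpha=n-2$: contributes $f_{n-3,r-2}$.
  \item $(n-2)\alpha=n-1$: forces $n-3\notin\dom(\alpha)$, so contributes $f_{n-4,r-1}$.
  \end{itemize}
  The option $(n-2)\alpha=n-3$ is impossible because $n-1\in\dom(\alpha)$.
\end{itemize}
Summing the blocks gives the stated recurrences. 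The convention $f_{k,k+1}=0$, together with $2\le r\le n-2$ and $n\ge4$, takes care of degenerate indices.

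For $f_{n,n-1}$ I count directly. An idempotent with image of size $n-1$ is either one of the $n$ partial identities in $E_{n-1}$, or a full idempotent of $\mathcal{CF}_{n}$ with image of size $n-1$. By Corollary \ref{c18}, the latter occurs only for even $n$, namely $\lambda_{1n}$; note that $\pi$ is not idempotent. This gives $n$ or $n+1$.

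For $f_{n}$, I apply the same case split without fixing the rank. This gives
$$f_{n}=f_{n-1}+2f_{n-2}+f_{n-3}\ (n \mbox{ odd}),\qquad f_{n}=2f_{n-1}+2f_{n-3}+f_{n-4}\ (n \mbox{ even}).$$
I then prove the closed form by induction on $n$. For $n=2m+1$ the recurrence gives $5^{m}+4\cdot5^{m-1}+5^{m-1}=2\cdot5^{m}$. For $n=2m$ it gives $4\cdot5^{m-1}+4\cdot5^{m-2}+5^{m-2}=5^{m}$. The base cases $f_{0}=1$, $f_{1}=2$, $f_{2}=5$, $f_{3}=10$ can be checked against $\mathcal{PCF}_{n}=\mathcal{PDF}_{n}$ and Table 2.

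The main obstacle is verifying that each block really is in bijection with the claimed smaller set. In particular, I must check that the neighbour of a removed boundary point imposes no hidden constraint. For example, in the case $(n-1)\alpha=n-1$ for odd $n$, I need that $n-2$ cannot map to $n-1$ (it is odd, so this holds). In the even case with $(n-2)\alpha=n-2$, I need that the remaining part on $[n-3]$ is completely unconstrained. I would check each junction against the three local conditions listed above.
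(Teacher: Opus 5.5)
Your proposal is correct and is essentially the paper's own proof. Your blocks are exactly its sets $F^{1},\dots,F^{4}$ (odd $n$) and $F^{5},\dots,F^{9}$ (even $n$), $f_{n,n-1}$ is counted the same way via $E_{n-1}$ and $\lambda_{1n}$, and $f_{n}$ comes from the same unranked recurrences by induction.

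One small slip concerns the base cases. $\mathcal{PCF}_{3}\neq\mathcal{PDF}_{3}$: the full idempotent fixing $1,3$ and sending $2\mapsto 1$ is not fence-preserving, since $3\prec 2$ but $3\not\preceq 1$. Also, Table 2 lists $12$, not $10$, for $n=3$. So either check $f_{3}=10$ directly, or use only $f_{0}=1$, $f_{1}=2$, $f_{2}=5$ as base cases (there $\mathcal{PCF}_{n}=\mathcal{PDF}_{n}$ does hold). Then $f_{3}=f_{2}+2f_{1}+f_{0}=10$ follows from your odd recurrence, whose case split is already valid at $n=3$.
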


\proof Since $E(L_{n-1}) =L_{n-1} =E_{n-1}$ if $n$ is odd, and since $E(L_{n-1}) =E_{n-1} \cup\{ \lambda_{1n}\}$ if $n$ is even, we have 
$f_{n,n-1} =\left\{ \begin{array}{ll}
	n & \mbox{if $n$ is odd,}\vspace{1mm} \\
	n+1 & \mbox{if $n$ is even.} 
\end{array}\right.$

Suppose $n$ is odd. Since $n\alpha=n$ whenever $n\in \dom(\alpha)$, it follows that $E(L_{n,r})$ is a disjoint union of the following sets:
\begin{eqnarray*}
	&&F^{1}_{n,r} =\{ \alpha \in E(L_{n,r}) : n\notin \dom(\alpha) \},\\
	&&F^{2}_{n,r} =\{ \alpha \in E(L_{n,r}) : n\alpha^{-1} =\{n\} \mbox{ and } n-1\notin \dom(\alpha)\},\\
	&&F^{3}_{n,r} =\{ \alpha \in E(L_{n,r}) : n\alpha^{-1} =\{n\} \mbox{ and } (n-1)\alpha^{-1} =\{n-1\}\}, \mbox{ and}\\
	&&F^{4}_{n,r} =\{ \alpha \in E(L_{n,r}) : n\alpha^{-1} =\{n-1,n\}\}. 
\end{eqnarray*}
Since it can be easily seen that
$\lvert F^{1}_{n,r} \rvert= \lvert E(L_{n-1,r}) \rvert$,\,  $\lvert F^{2}_{n,r} \rvert= \lvert E(L_{n-2,r-1})\rvert$,\, $\lvert F^{3}_{n,r} \rvert= \lvert E(L_{n-2,r-2}) \rvert$ and $\lvert F^{4}_{n,r} \rvert= \lvert E(L_{n-3,r-1}) \rvert$, 
we conclude that $f_{n,r} =f_{n-1,r}+ f_{n-2,r-1} +f_{n-2,r-2} + f_{n-3,r-1}$. 

\smallskip

Suppose $n$ is even. Since $n\alpha=n$ or $n\alpha=n-1$  whenever $n\in \dom(\alpha)$, it follows that $E(L_{n,r})$ is a disjoint union of the following sets:
\begin{eqnarray*}
	&&F^{5}_{n,r} =\{ \alpha \in E(L_{n,r}) : n\notin \dom(\alpha) \},\\
	&&F^{6}_{n,r} =\{ \alpha \in E(L_{n,r}) : n\alpha^{-1} =\{n\}\, \},\\
	&&F^{7}_{n,r} =\{ \alpha \in E(L_{n,r}) : (n-1)\alpha^{-1} =\{n-1,n\} \mbox{ and }n-2\notin \dom(\alpha)\}, \\
	&&F^{8}_{n,r} =\{ \alpha \in E(L_{n,r}) : (n-1)\alpha^{-1} =\{n-1,n\}\mbox{ and }(n-2)\alpha =n-2\},\mbox{ and}\\
	&&F^{9}_{n,r} =\{ \alpha \in E(L_{n,r}) : (n-1)\alpha^{-1} =\{n-2,n-1,n\}\,\}. 
\end{eqnarray*}
Since it can be easily seen that
$\lvert F^{5}_{n,r} \rvert= \lvert E(L_{n-1,r}) \rvert$,\,   $\lvert F^{6}_{n,r} \rvert= \lvert E(L_{n-1,r-1}) \rvert$,\,   $\lvert F^{7}_{n,r} \rvert=  \lvert E(L_{n-3,r-1}) \rvert$,\, $\lvert F^{8}_{n,r} \rvert= \lvert E(L_{n-3,r-2}) \rvert$, and $\lvert F^{9}_{n,r} \rvert= \lvert E(L_{n-4,r-1}) \rvert$ (notice that $E(L_{n-4,r-1}) =\emptyset$ when ever $r=n-2$), we conclude that $f_{n,r} =f_{n-1,r}+ f_{n-1,r-1} +f_{n-3,r-1} +f_{n-3,r-2} +f_{n-4,r-1}$. 

\smallskip

Moreover, if $n$ is odd, then $f_{n}= f_{n-1}+ 2f_{n-2} +f_{n-3}$ since $E(\mathcal{PCF}_{n})$ is a disjoint union of the following sets:
\begin{eqnarray*}
	&&\{ \alpha \in E(\mathcal{PCF}_{n}) : n\notin \dom(\alpha) \},\\ 
	&&\{ \alpha \in E\mathcal{PCF}_{n}) : n\alpha^{-1} =\{n\} \mbox{ and } n-1\notin \dom(\alpha)\},\\
	&&\{ \alpha \in E\mathcal{PCF}_{n}) : n\alpha^{-1} =\{n\} \mbox{ and } (n-1)\alpha =n-1\}, \mbox{ and}\\
	&&\{ \alpha \in E(\mathcal{PCF}_{n}) : n\alpha^{-1} =\{n-1,n\}\,\}, 
\end{eqnarray*}
and if $n$ is even, then $f_{n}= 2f_{n-1}+ 2f_{n-3}+ f_{n-4}$ since $E(\mathcal{PDF}_{n})$ is a disjoint union of the following sets:
\begin{eqnarray*}
	&&\{ \alpha \in E(\mathcal{PCF}_{n}) : n\notin \dom(\alpha) \},\\
	&&\{ \alpha \in E(\mathcal{PCF}_{n}) : n\alpha^{-1} =\{n\}\,\}\,\\
	&&\{ \alpha \in E(\mathcal{PCF}_{n}) : (n-1)\alpha^{-1} =\{n-1,n\} \mbox{ and } n-2\notin \dom(\alpha)\}, \\
	&&\{ \alpha \in E(\mathcal{PCF}_{n}) : (n-1)\alpha^{-1} =\{n-1,n\}  \mbox{ and } (n-2)\alpha^{-1} =\{n-2\}\}, \\
	&&\{ \alpha \in E(\mathcal{PCF}_{n}) : (n-1)\alpha^{-1} =\{n-2, n-1,n\}\}. 
\end{eqnarray*}
Therefore, one can easily show inductively on $n$ that $f_{n} =\left\{ \begin{array}{ll}
	2\cdot 5^{m} & \mbox{if $n$ is odd,}\vspace{1mm} \\
	5^{m} & \mbox{if $n$ is even,}
\end{array}\right.$ where $m=\lfloor \frac{n}{2}\rfloor$. \qed

\begin{table}[!ht]	
	\begin{center} \label{Tab:card3}
		\begin{tabular}{c|cccccccc|c}	
			n$\backslash$r  & 0 & 1 & 2 & 3  & 4  & 5 & 6 & 7 & $\lvert E(\mathcal{PCF}_{n})\rvert$ \\	\hline \vspace*{-5mm}\\
			1  & 1 &   1 &     &     &    &    &   &   &    2  \\
			\hline \vspace*{-5mm}\\
			2  & 1 &   3 &   1 &     &    &    &   &   &    5  \\
			\hline \vspace*{-5mm}\\
			3  & 1 &   5 &   3 &   1 &    &    &   &   &   10  \\
			\hline \vspace*{-5mm}\\
			4  & 1 &  8 &  10 &  5 &  1 &    &   &   &   25  \\
			\hline \vspace*{-5mm}\\
			5  & 1 &  10 &  19 &  14 & 5&  1 &   &   &   50  \\
			\hline \vspace*{-5mm}\\
			6  & 1 & 13 & 38 & 42 & 23 & 7  & 1 &    &  125  \\
			\hline \vspace*{-5mm}\\
			7  & 1 & 15 & 57 & 81 & 61 & 27 & 7 & 1 & 250 \\
			
		\end{tabular}
		\caption{$\lvert E(L_{n,r}) \rvert$ and $\lvert E(\mathcal{PCF}_{n})\rvert$.}
	\end{center}
\end{table}

\smallskip

To find minimal generating sets of $\mathcal{PCF}(n,n-1)$ and $\mathcal{PCF}_{n}$, we determine some crucial elements in $L_{n-2}$. Since $E_{n-2} \subseteq \langle E_{n-1}\rangle$, we just consider elements of $L_{n-2} \setminus E_{n-2}$. If $\alpha \in L_{n-2} \setminus E_{n-2}$, then $(\dom(\alpha) \cap [n]^{e}) \setminus \fix(\alpha) \neq \emptyset$. For each $j\in [n]^{e}\setminus \{n\}$, we define the mappings $\xi_{1j} : [n]\setminus \{j+1\} \rightarrow [n]$ and  $\xi_{2j} : [n]\setminus \{j-1\} \rightarrow [n]$ by
\begin{eqnarray*}
	i\xi_{1j} =\left\{\begin{array}{cl}
		i  & \mbox{if } i\in [n]\setminus \{j,j+1\},\\
		j-1 & \mbox{if } i=j
	\end{array}	\right. &\mbox{and}&
	i\xi_{2j} =\left\{\begin{array}{cl}
		i  & \mbox{if } i\in [n]\setminus \{j,j-1\},\\
		j+1 & \mbox{if } i=j.
	\end{array}	\right.
\end{eqnarray*}
Moreover, if we let 
$$L^{\pm}_{n-2} =\{ \xi_{1j},\, \xi_{2j} : j\in [n]^{e}\setminus \{n\} \},$$
then it is a routine matter to check that $L_{n-2}= E_{n-2}\cup L^{\pm}_{n-2}$ if $n$ is odd. However, it is not that simple if $n$ is even. Let $\alpha \in L_{n-2} \setminus E_{n-2}$. If $n\alpha =n$, then clearly, $\alpha \in L^{\pm}_{n-2}$. If $n\alpha =n-1$, then there exists a subset $Y$ of $[n-2]$ with $n-3$ elements such that either $\dom(\alpha)=Y\cup \{n\}$ or $\dom(\alpha)=Y\cup \{n-1,n\}$. Moreover, since we have $\alpha =1_{\dom(\alpha)} \lambda_{1n}$ (as defined in (\ref{e1})) if $n\alpha =n-1$, we obtain $L_{n-2} \subseteq \langle E_{n-2}\cup L^{\pm}_{n-2} \cup \{ \lambda_{1n} \} \rangle$. Hence we need to check the elements of $L^{\pm}_{n-2}$ if they are undecomposable in $\mathcal{PCF}(n,n-1)$. 

\smallskip

\begin{lemma} \label{l24}
	Each element of $L^{\pm}_{n-2}$ is undecomposable in $\mathcal{PCF}(n,n-1)$, and so in $\mathcal{PCF}_{n}$.  
\end{lemma}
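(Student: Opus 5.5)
By the symmetry between $\xi_{1j}$ and $\xi_{2j}$, it suffices to treat $\alpha=\xi_{1j}$ for a fixed $j\in[n]^{e}\setminus\{n\}$; the case $\xi_{2j}$ is the mirror image, with $j+1$ playing the role of $j-1$. The facts about $\alpha$ that we use are these. Its domain is $[n]\setminus\{j+1\}$, and $\alpha$ is injective except that $(j-1)\alpha=j\alpha=j-1$. Hence $\im(\alpha)=[n]\setminus\{j,j+1\}$ and $\lvert\im(\alpha)\rvert=n-2$. Suppose $\alpha=\beta\gamma$ with $\beta,\gamma\in\mathcal{PCF}(n,n-1)\setminus\{\alpha\}$. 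We will show that $\beta=\alpha$ or $\gamma=\alpha$, which is a contradiction. Throughout we use only the general facts $\dom(\alpha)\subseteq\dom(\beta)$, $\ker(\beta)\subseteq\ker(\alpha)$, $\im(\alpha)\subseteq\im(\gamma)$, and $\fix(\alpha)\subseteq\fix(\gamma)\cap\im(\beta)$. The last holds because every element is fence-decreasing, so each odd fixed point of $\alpha$ must be fixed by both factors, as in the proof of Lemma \ref{l12}.

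The first step is to pin down $\beta$ on $\dom(\alpha)\setminus\{j\}$. For $i\in\dom(\alpha)\setminus\{j\}$ we have $i\alpha=i$. For odd $i$ this forces $i\beta=i$. For even $i\neq j$, suppose $i\beta=i\pm1$; then $i=(i\pm1)\gamma$, which is impossible since $\gamma$ is decreasing and $i\pm1$ is minimal. So $\beta$ restricted to $\dom(\alpha)\setminus\{j\}$ is the identity. The second step is to determine $j\beta\in\{j-1,j,j+1\}$. If $j\beta=j+1$, then $j+1\in\dom(\gamma)$ forces $(j+1)\gamma=j+1\ne j-1$, a contradiction. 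So either $j\beta=j-1$ or $j\beta=j$.

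The third step splits into these two cases. If $j\beta=j-1$, then $\beta$ agrees with $\alpha$ on $\dom(\alpha)$. Moreover $j+1\notin\dom(\beta)$: otherwise $(j+1)\beta\in\{j,j+1\}$, and $(j+1)\beta=j$ would be impossible since $j+1$ is odd and must be fixed. If $j+1\in\dom(\beta)$ then $(j+1)\beta=j+1$, so $j+1\in\dom(\alpha)$, which is false. Hence $\beta=\alpha$. If instead $j\beta=j$, then $\beta=1_{\dom(\alpha)}$ restricted or extended, and $j\gamma=j-1$ while $\gamma$ fixes every element of $\im(\alpha)$. Then $\dom(\gamma)\supseteq[n]\setminus\{j+1\}$. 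Since $\gamma\ne\alpha$, either $j+1\in\dom(\gamma)$ with $(j+1)\gamma=j+1$, or $\gamma$ differs from $\alpha$ elsewhere, which it cannot. In the first situation $\gamma$ is the full map $\lambda_{1j}$. This map is not fence-preserving when $j<n$, because $j\prec j+1$... actually $j+1\prec j$, while $(j+1)\gamma=j+1\not\preceq j-1=j\gamma$. So $\gamma\notin\mathcal{PCF}_{n}$, and the only option is $\gamma=\alpha$, again a contradiction.

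The main obstacle is this last case, where $\beta$ is a partial identity and $\gamma$ would have to be $\lambda_{1j}$. This is exactly where fence-preservation (rather than only fence-decreasingness) is used, and it explains why $\xi_{1j}$ is undecomposable in $\mathcal{PCF}_{n}$ although it factors as $1_{\dom(\xi_{1j})}\lambda_{1j}$ in $\mathcal{PDF}_{n}$. We will also check that the rank bound in $\mathcal{PCF}(n,n-1)$ is not needed beyond confirming that the factors have image size at most $n-1$. Finally, undecomposability in $\mathcal{PCF}_{n}$ follows because the only element of rank $n$ is $1_{n}$, and any factorization using $1_{n}$ is trivial.
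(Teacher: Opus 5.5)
Your plan follows the paper's proof: both factors fix every element of $[n]\setminus\{j,j+1\}$, and you then split on $j\beta\in\{j-1,j\}$. However, the case $j\beta=j-1$ contains a step that fails. You argue that if $j+1\in\dom(\beta)$, then $(j+1)\beta=j+1$, ``so $j+1\in\dom(\alpha)$''. This does not follow. Having $j+1\in\dom(\beta\gamma)$ would also require $j+1\in\dom(\gamma)$, and nothing forces that.

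This configuration really occurs. In $\mathcal{PDF}(n,n-1)$ one has $\xi_{1j}=\lambda_{1j}\,1_{[n]\setminus\{j+1\}}$. Here $\beta=\lambda_{1j}$ satisfies $j\beta=j-1$ and $j+1\in\dom(\beta)$, both factors have image of size $n-1$, and neither factor equals $\xi_{1j}$. Your reasoning in this case uses only fence-decreasingness and domain bookkeeping. It would therefore equally ``prove'' that $\xi_{1j}$ is undecomposable in $\mathcal{PDF}(n,n-1)$, which is false.

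The repair is the argument you already give in the other case, now applied to $\beta$. If $j+1\in\dom(\beta)$, then $\beta$ is the full map $\lambda_{1j}$. This map is not fence-preserving, since $j+1\prec j$ while $(j+1)\beta=j+1\not\preceq j-1=j\beta$. Hence $j+1\notin\dom(\beta)$ and $\beta=\xi_{1j}$.

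Accordingly, your closing remark that fence-preservation enters only where $\gamma$ would be $\lambda_{1j}$ is wrong. In $\mathcal{PDF}_{n}$, $\xi_{1j}$ factors both as $1_{\dom(\xi_{1j})}\lambda_{1j}$ and as $\lambda_{1j}1_{\dom(\xi_{1j})}$. Each order must be excluded by fence-preservation. The same holds in the mirrored case for $\xi_{2j}$, using $\lambda_{2j}$ together with $j-1\prec j$ and $j-1\not\preceq j+1$.

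Two minor points:
\begin{itemize}
\item The listed fact $\ker(\beta)\subseteq\ker(\alpha)$ is false for partial maps; for instance, $(j+1,j+1)\in\ker(\lambda_{1j})$. It only holds after restricting to $\dom(\alpha)\times\dom(\alpha)$, though you never use it.
\item For even $n$ there is no automorphism of the up-fence exchanging $\xi_{1j}$ and $\xi_{2j}$. So ``symmetry'' should be read as ``the analogous argument'', which does go through.
\end{itemize}
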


\proof Let $j\in [n]^{e}\setminus \{n\}$, and suppose that $\xi_{1j} =\beta \gamma$ for some $\beta, \gamma \in \mathcal{PCF}(n,n-1)$. Then $\fix(\xi_{1j}) =[n] \setminus \{j,j+1\}$ is a subset of both $\fix(\beta)$ and $\fix(\gamma)$, and so either $j\beta =j-1$ or $j\beta =j$. Thus, either $\xi_{1j} =\beta$ in the first case, or $\xi_{1j} =\gamma$ in the second case. One can similarly show that $\xi_{2j}$ is undecomposable in $\mathcal{PCF}(n,n-1)$.  Since $L_{n}= \{1_{n}\}$, the proof is completed. \qed

We notice that $\lvert L^{\pm}_{n-2} \rvert=\left\{ \begin{array}{cl}
	n-1& \mbox{if $n$ is odd},\\
	n-2& \mbox{if $n$ is even},
	\end{array} \right.$
and remind that $L^{*}_{r} =\emptyset$ for each $n-2\leq r\leq n$. Before giving one of the main theorems of this section, we let
$$G_{n-1} =\left\{ \begin{array}{ll}
	E_{n-1}	\cup L^{\pm}_{n-2}			& \mbox{if $n$ is odd},\\
	E_{n-1}	\cup L^{\pm}_{n-2} \cup\{ \lambda_{1n}\}	& \mbox{if $n$ is even}.
\end{array} \right.$$
Then, by Propositions \ref{p20}, \ref{p21} and \ref{p23}, and Lemma \ref{l24}, we have the following theorem.

\begin{theorem}\label{t25}
	$G_{n-1}$ is the unique minimal generating set of $\mathcal{PCF}(n,n-1)$, and so $G_{n-1} \cup\{ 1_{n}\}$ is the unique minimal generating set of $\mathcal{PCF}_{n}$. Furthermore, 
	$$\rank(\mathcal{PCF}(n-1))= \idrank(\mathcal{PCF}(n-1)) = 2n-1$$
	whatever the parity of $n$ is, and so
	$\rank(\mathcal{PCF}_{n})= \idrank(\mathcal{PCF}_{n})= 2n$
	whatever the parity of $n$ is. \hfill $\square$
\end{theorem}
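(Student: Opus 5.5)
The plan has two halves: show that $G_{n-1}$ generates $\mathcal{PCF}(n,n-1)$, and show that every generating set must contain $G_{n-1}$. Together these give uniqueness and minimality. The statement for $\mathcal{PCF}_{n}$ then follows by adjoining $1_{n}$, which is the only element of $L_{n}$ and is clearly undecomposable, since a product of non-identity elements loses an image point.

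\emph{Generation.} First I handle $L_{n-1}$, which is described explicitly before Lemma \ref{l24}. For $n$ odd, $L_{n-1}=E_{n-1}$. For $n$ even, $L_{n-1}=E_{n-1}\cup\{\lambda_{1n},\pi\}$, and $\pi=1_{\dom(\pi)}\lambda_{1n}$. So $L_{n-1}\subseteq\langle G_{n-1}\rangle$.

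Next comes $L_{n-2}$. By the discussion before Lemma \ref{l24}, $L_{n-2}\subseteq\langle E_{n-2}\cup L^{\pm}_{n-2}\cup\{\lambda_{1n}\}\rangle$, where $\lambda_{1n}$ is needed only for even $n$. Each partial identity in $E_{n-2}$ is a product of two partial identities in $E_{n-1}$, so $L_{n-2}\subseteq\langle G_{n-1}\rangle$.

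For $1\le r\le n-3$, Propositions \ref{p20} and \ref{p21} give $L_r\subseteq\langle E(L_{r+1})\cup L^{*}_{r+1}\rangle\subseteq\langle L_{r+1}\rangle$. Descending induction on $r$ then places every $L_r$ with $r\ge1$ in $\langle G_{n-1}\rangle$. Finally, $0_n=1_{\{1\}}1_{\{2\}}$ is a product of lower partial identities, so $L_0$ is covered as well.

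\emph{Necessity.} Elements of $E_{n-1}$, and $\lambda_{1n}$ when $n$ is even, lie in $E(L_{n-1})$. By Lemma \ref{l19}(i) with $r=n-1$, they are undecomposable in $\mathcal{PCF}(n,n-1)$. Elements of $L^{\pm}_{n-2}$ are undecomposable by Lemma \ref{l24}. Undecomposable elements must belong to every generating set, so $G_{n-1}$ is contained in every generating set. Combined with the generation step, $G_{n-1}$ is the unique minimal generating set.

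Since $G_{n-1}$ consists of idempotents, it follows that $\rank=\idrank=|G_{n-1}|$. To count, recall $|E_{n-1}|=n$ and the count $|L^{\pm}_{n-2}|$ recorded before the theorem. For $n$ odd this gives $n+(n-1)=2n-1$. For $n$ even it gives $n+(n-2)+1=2n-1$. Adding $1_n$ yields $2n$ for $\mathcal{PCF}_n$.

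The step I expect to need the most care is confirming that every element of $G_{n-1}$ is idempotent, as the equality $\rank=\idrank$ requires. For $\xi_{1j}$ this holds because $j-1$ is fixed and lies in the domain. For $\xi_{2j}$ it holds because $j+1$ lies in the domain, since only $j-1$ is removed. A second point needing care is checking that $L^{\pm}_{n-2}$ indeed lies in $\mathcal{PCF}_n$, i.e. that it is fence-preserving. This works because the neighbour $j\pm1$ on the side towards which $j$ collapses is excluded from the domain.
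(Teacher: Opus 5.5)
Your proposal is correct and is essentially the paper's argument made explicit. It has the same three parts: generation via the description of $L_{n-1}$, the reduction of $L_{n-2}$, and the descent $L_r\subseteq\langle L_{r+1}\rangle$ from Propositions \ref{p20} and \ref{p21}; necessity via Lemma \ref{l19}(i) and Lemma \ref{l24}; and the count $2n-1$.

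One caution concerns the containment $L_{n-2}\subseteq\langle E_{n-2}\cup L^{\pm}_{n-2}\cup\{\lambda_{1n}\}\rangle$ that you import. It is true, but the paper argues it too quickly. For example, $1_{[n]\setminus\{j-1,j+1\}}\xi_{1j}$ lies in $L_{n-2}$ but not in $E_{n-2}\cup L^{\pm}_{n-2}$. For even $n$, $\xi_{1,n-2}\lambda_{1n}$ sends $n$ to $n-1$, yet it is not $1_{[n]\setminus\{n-1\}}\lambda_{1n}=\pi$. So it is cleaner to state directly that every element of $L_{n-2}\setminus E_{n-2}$ is one of $1_Y\xi_{1j}$, $1_Y\xi_{2j}$, $1_Y\lambda_{1n}$, $1_Y\xi_{1,n-2}\lambda_{1n}$, where $1_Y$ is a partial identity of rank $n-1$ or $n-2$, or is omitted.
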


\noindent\textbf{Research ethics:} Not applicable (this research does not involve human participants or animals).

\noindent\textbf{Informed consent:} Not applicable.

\noindent\textbf{Use of Large Language Models, AI and Machine Learning Tools}: None declared.

\noindent\textbf{Consent to participate:} Not applicable.

\noindent\textbf{Consent for publication:} Not applicable.

\noindent\textbf{Author contributions:} The author is solely responsible for the work.

\noindent\textbf{Conflict of interest:} The authors declares that there are no competing interests.

\noindent\textbf{Research funding:} The first two authors acknowledges the support by the Scientific and Technological Research Council of Turkey (TUBITAK) under the Grant Number 123F463.

\noindent\textbf{Data availability:} Not applicable.

\end{document}